\title{Deformations of quasi-categories in modules}
\author{Violeta Borges Marques}
\address[Violeta Borges Marques]{Universiteit Antwerpen, Departement Wiskunde, Middelheimcampus,
Middelheimlaan 1,
2020 Antwerp, Belgium}
\email{Violeta.BorgesMarques@uantwerpen.be}
\author{Wendy Lowen} 
\address[Wendy Lowen]{Universiteit Antwerpen, Departement Wiskunde, Middelheimcampus,
Middelheimlaan 1,
2020 Antwerp, Belgium}
\email{wendy.lowen@uantwerpen.be}
\author{Arne Mertens}
\address[Arne Mertens]{Universiteit Antwerpen, Departement Wiskunde, Middelheimcampus,
Middelheimlaan 1,
2020 Antwerp, Belgium}
\email{arne.mertens@uantwerpen.be}
\thanks{
This project has received funding from the European Research Council (ERC) under the European Union’s Horizon 2020 research and innovation programme (grant agreement No. 817762).
}
\subjclass[2022]{13D10, 18N60 (Primary), 18M05 (Secondary)}
\keywords{}
\DeclareMathOperator{\Hom}{Hom}
\DeclareMathOperator{\id}{id}
\DeclareMathOperator{\Fun}{Fun}
\DeclareMathOperator*{\colim}{colim}
\DeclareMathOperator{\Set}{Set}
\DeclareMathOperator{\Mod}{Mod}
\DeclareMathOperator{\SSet}{SSet}
\DeclareMathOperator{\Colax}{Colax}
\DeclareMathOperator{\Quiv}{Quiv}
\DeclareMathOperator{\Cat}{Cat}
\newcommand{\fint}{\mathbf{\Delta}_{f}} 
\newcommand{\simp}{\mathbf{\Delta}} 
\newcommand{\nec}{\mathcal{N}ec} 
\newcommand{\ts}{S_{\otimes}} 
\newcommand{\lra}{\longrightarrow} 
\newcommand{\N}{\mathbb{N}}
\newcommand{\vvv}{\mathcal{V}} 
\newcommand{\www}{\mathcal{W}} 
\newcommand{\zzz}{\mathcal{Z}} 
\newcommand{\aaa}{\mathcal{A}} 
\newcommand{\mmm}{\mathcal{M}} 
\newcommand{\ccc}{\mathcal{C}}
\newcommand{\op}{\mathrm{op}} 
\newcommand{\necop}{\mathcal{N}ec^{\op}}
\DeclareMathOperator{\Ext}{Ext}
\DeclareMathOperator{\Tor}{Tor}
\DeclareMathOperator{\can}{can}
\newtheorem{Thm}{Theorem}[section]
\newtheorem*{Thm*}{Theorem}
\newtheorem{Lem}[Thm]{Lemma}
\newtheorem{Prop}[Thm]{Proposition}
\theoremstyle{definition}
\newtheorem{Def}[Thm]{Definition}
\newtheorem{Ex}[Thm]{Example}
\theoremstyle{remark}
\newtheorem{Rem}[Thm]{Remark}
\begin{document}

\begin{abstract}
The framework of templicial objects was put forth in \cite{LM1} in order to develop higher categorical concepts in the presence of enrichment. In particular, quasi-categories in modules constitute a subclass of templicial modules which may be considered as a kind of ``weak dg-categories (concentrated in homologically positive degrees)'' according to \cite{LM2}. The main goal of the present paper is to initiate the deformation theory of templicial modules. In particular, we show that quasi-categories in modules are preserved under levelwise flat infinitesimal deformation. 
\end{abstract}

\maketitle

\tableofcontents

\section{Introduction}

Algebraic deformation theory dates back to the foundational work of Gerstenhaber on algebra deformations \cite{gerstenhaber1} \cite{gerstenhaber2}. Its core ideas were later applied to various other objects of algebro-geometric nature, like bialgebras \cite{gerstenhaber1990bialgebra} \cite{quantum} \cite{tetra} \cite{ginot2016deformation} and presheaves of algebras or, more generally, prestacks \cite{gerstenhaber1988cohomology} \cite{lowen2011hochschild}
\cite{DLpre} \cite{box}. Deformation theory plays a prominent role in noncommutative geometry. For instance, the deformation theory of algebras culminates in the topic of deformation quantisation \cite{Konts}, bialgebra deformations are naturally relevant to the theory of quantum groups, and deformations of structure sheaves yield noncommutative schemes in the sense of Van den Bergh, modelled by abelian categories \cite{artintatevandenbergh} \cite{VdBquadrics} \cite{lowen2006deformation} \cite{lowenprestacks} \cite{VdBdefquant}. 

Meanwhile, derived and more general triangulated categories have taken centre stage in (noncommutative) algebraic geometry and neighbouring fields. More precisely, to develop geometrical ideas on this level one typically uses dg-categories ``enhancing'' the triangulated ones \cite{bondalkapranovenhanced} \cite{keller2006differential} \cite{KaledinHdR} \cite{Kal2}.
Sometimes one prefers to use the more flexible (but also algebraically more involved) $A_{\infty}$-categories instead \cite{lefevre} \cite{keller2001introduction}. 
In the context of deformations, one encounters variants that are to some extent ``curved'' \cite{positselski2018weakly} \cite{dedeken2018filtered}, as is the case for Fukaya categories in general \cite{fukaya2003deformation} \cite{fukaya2009lagrangian}.
This curvature phenomenon takes place in cohomological degree 2, and can sometimes, but not always, be negotiated \cite{KL} \cite{LVdBcur}.

The homotopy (Morita) theory of dg-categories was developed by To\"en in \cite{toen2007homotopy} building on Tabuada's model category structure \cite{tabuada2005structure}. In spite of the huge success of this theory, it has some drawbacks related to it being a ``strict'' higher categorical model. For example, the monoidal stucture and the model structure on this category do not give rise to a monoidal model category. 
In recent years, several attempts were undertaken to build new models for ``weak'' dg-categories inspired by the variety of models for $(\infty,1)$-categories available in algebraic topology \cite{bergner2007model}\cite{hirschowitz1998descente}\cite{rezk2001model}\cite{joyal2002quasi}. For instance, and most recently, a model of ``dg-Segal spaces'' was put forth by Dimitriadis Bermejo \cite{bermejo2022dgSegal}. Earlier on, ``dg-Segal categories'' have been investigated by Bacard building on work by Simpson and Leinster \cite{bacard2010Segal} \cite{simpson2012homotopy} \cite{leinster2000homotopy}. Approaches to linearity and more general enrichment of a different flavour are due to Lurie \cite{lurie2016higher} and Gepner-Haugseng \cite{gepner2015enriched}.

Recently in \cite{LM1}, \cite{LM2}, templicial objects in a monoidal category were put forth as an appropriate generalisation of simplicial sets upon replacing $\Set$ by a not necessarily cartesian monoidal category $\vvv$. Although more involved in terms of algebraic structure, these templicial objects - comprised of $\vvv$-objects - are just as tangible as simplicial sets. In particular, for $\vvv = \Mod(k)$ with $k$ a commutative ring, they are amenable to algebraic deformation theory. In \cite{LM1}, the framework of templicial objects is used in order to define quasi-categories in $\vvv$ as a counterpart of the highly succesful quasi-categories ``in sets'' of Boardman-Vogt \cite{boardmanvogt}, Joyal \cite{joyal2002quasi} and Lurie \cite{lurie2009higher}. According to \cite{LM2}, quasi-categories in modules may be considered as a kind of ``weak dg-categories concentrated in homologically positive degrees''. Note that dg-categories in those degrees have (uncurved) $A_{\infty}$-categories as deformations according to their Hochschild complex.

The main goal of the present paper is to show that \emph{the class of quasi-categories in modules is stable under levelwise flat infinitesimal deformation of templicial modules} (Theorem \ref{thmmainq}). 
This result does not follow in a straightforward manner from the definition of a quasi-category $X$ in $\vvv$, which is expressed in terms of a familiar ``weak Kan'' horn lifting property for associated necklicial modules (Definition \ref{DefwK}). Instead, one resorts to ``wings'' instead of ``horns'', making use of the fact that in the specific case $\vvv = \Mod(k)$, quasi-categories in modules are equivalently characterised through lifting wings (Definition \ref{Defwings} and Proposition \ref{propwwK}).
In Theorem \ref{thmwingsmain}, using truncations of wings, we prove a key stability property for levelwise flat quasi-categories with respect to external tensor products. The proof of Theorem \ref{thmmainq} is obtained by combining this with the relevant stability property with respect to extensions (Proposition \ref{propextKan}).

Further, inspired by the well known preservation of projectivity for modules, we show that \emph{the class of deg-projective templicial modules is also stable under levelwise flat infinitesimal deformation} (Theorem \ref{propdegproj}). 
This class consists of templicial modules with well behaved submodules of degenerate, resp. nondegenerate, simplices. In particular, for deg-projective templicial modules a version of the Eilenberg-Zilber lemma holds \cite{LM1}.


In future work, the deformation theory of quasi-categories in modules will be further developed in various directions. 
Since, by the results in the present paper, the main properties of interest are preserved under deformation, one may focus on deforming templicial modules from a purely algebraic perspective. 
In work in progress, this is done by means of an associated Hochschild complex. 
Further, we will investigate the relationship between deformations of templicial modules on the one hand and those of dg-categories on the other hand. For this we will make use of an enhancement of the dg-nerve \cite{lurie2016higher} that lands in quasi-categories in modules, as constructed in \cite{LM2} and further investigated in \cite{mertens2023nerves}.

\section{Quasi-categories in a monoidal category}\label{parparenriched}

In this section, we recall the framework of templicial objects (see \S \ref{partemp}) in a suitable monoidal category $\vvv$ (see \S \ref{parmon}) from \cite{LM1}. With the eye on deformation theory, our main interest in this paper is in templicial modules, that is, letting $\vvv$ be the category of modules over a commutative ring. The case $\vvv = \Set$ recovers simplicial sets as templicial objects in $\Set$.
In \S \ref{parqcat}, we introduce our main concept of interest, which is that of a quasi-category in $\vvv$. 
This is a templicial object in $\vvv$ which satisfies a familiar horn lifting property. In order to express this property, one requires necklicial $\vvv$-objects (see \S \ref{parnec}). We will make crucial use of the similar property of lifting wings,  which is equivalent to lifting horns for necklicial modules (see \S \ref{parwings}). Finally, in \S \ref{pardegproj}, we introduce another property of interest for templicial modules, namely deg-projectivity from \cite{LM2}, which expresses the existence of good submodules of degenerate and non-degenerate simplices.

\subsection{Monoidal setup}\label{parmon}
For general background on monoidal categories, enriched categories and (colax) monoidal functors we refer to \cite{kelly2005basic}\cite{aguiar2010monoidal}.
Throughout, let $(\mathcal{V},\otimes,I)$ be a fixed bicomplete, symmetric monoidal closed category (see e.g. \cite{street1974elementary}). Up to natural isomorphism, there is a unique colimit preserving functor 
\begin{equation}\label{eqF}
F: \Set\longrightarrow \mathcal{V}
\end{equation}
such that $F(\{*\})\cong I$. This functor is strong monoidal and left adjoint to the forgetful functor $U = \mathcal{V}(I,-): \mathcal{V}\lra \Set$. 

Recall that a $\vvv$-enriched category $\ccc$ with object set $S$ consists of a collection $(\ccc(a,b))_{a,b\in S}$ with $\ccc(a,b)\in \mathcal{V}$ endowed with multiplications
$$
\mathcal{C}(a,b)\otimes \mathcal{C}(b,c)\longrightarrow \mathcal{C}(a,c)
$$
and units $I \lra \ccc(a,a)$ satisfying the usual associativity and unitality axioms.

The quasi-categories in $\vvv$ introduced in \cite{LM1} (see \S \ref{parqcat}) are defined using appropriate generalisations of simplicial sets (see \S \ref{partemp}). In order to combine the change from $\Set$ to $\vvv$ for hom-spaces with the categorical viewpoint which requires an underlying set of objects, we make use of enriched quivers.
More precisely, given a set $S$, we refer to a collection $Q = (Q(a,b))_{a,b\in S}$ with $Q(a,b)\in \mathcal{V}$ as a \emph{$\mathcal{V}$-enriched quiver} with $S$ its set of \emph{vertices}. A \emph{quiver morphism} $f: Q\rightarrow P$ is a collection $(f_{a,b})_{a,b\in S}$ of morphisms $f_{a,b}: Q(a,b)\rightarrow P(a,b)$ in $\mathcal{V}$. We thus obtain a category
$$
\mathcal{V}\Quiv_{S}
$$
of $\mathcal{V}$-enriched quivers with a fixed vertex set $S$. This category can be equipped with a monoidal structure $(\otimes_{S},I_{S})$ such that a monoid in $\mathcal{V}\Quiv_{S}$ corresponds precisely to a $\vvv$-enriched category with object set $S$. We refer to \cite{LM1} for more details.

For the entire paper, let us fix a unital commutative ring $k$. Our main case of interest is the category $\vvv = \Mod(k)$ of $k$-modules with the tensor product $\otimes_{k}$ over $k$ as monoidal structure. Though many of the proofs below can be easily extended to more general $\vvv$, we will occasionally restrict to $k$-modules for convenience's sake.

\subsection{Templicial objects}\label{partemp}

We briefly recall the framework of templicial objects in $\vvv$ from \cite{LM1}, referring to loc. cit. for a more detailed exposition and the basic results. Roughly speaking, templicial objects can be seen as an appropriate generalisation of simplicial sets upon replacing $\Set$ by a not necessarily cartesian monoidal category $\vvv$.

Let $\simp$ be the simplex category, which consists of the finite ordinals $[n] = \{0,...,n\}$ for $n\geq 0$ as objects with order preserving maps as morphisms. We will make use of the finite interval category $\fint$, which is the subcategory of $\simp$ with the same objects and containing those morphisms $f: [m]\rightarrow [n]$ that preserve the endpoints, that is, $f(0) = 0$ and $f(m) = n$. The category $\fint$ is strict monoidal for $[n] + [m] =[n+m]$.

\begin{Def}\label{definition: temp. obj.}
A \emph{templicial object} in $\mathcal{V}$ is a pair $(X,S)$ with $S$ a set and
$$
X: \fint^{op}\longrightarrow \mathcal{V}\Quiv_{S}
$$
a strongly unital, colax monoidal functor. 
\end{Def}

For our purposes, we will mainly be interested in the category 
$$
\ts\mathcal{V}_S
$$
of templicial objects in $\vvv$ with fixed vertex set $S$, with monoidal natural transformation as morphisms.
We refer to loc. cit. for the definition of the category $\ts\mathcal{V}$ of templicial objects in $\vvv$ with varying object sets.

For $\vvv = \Set$, there is a canonical equivalence of categories $\SSet \cong \ts \vvv$ where $\SSet = \Fun(\simp^{op}, \Set)$ is the category of simplicial sets \cite[Proposition 2.7]{LM1}\cite[Proposition 3.1.7]{leinster2000homotopy}. Consequently, the functor $F: \Set \lra \vvv$ from \eqref{eqF} induces a functor
\begin{equation}\label{eqtildeF}
	\tilde{F}: \SSet \lra \ts \vvv.
\end{equation}
We will refer to templicial objects of the form $\tilde{F}(X)$ for $X \in \SSet$ as \emph{free} templicial objects in $\vvv$.

Let $(X,S)$ be a templicial object. The structure of $X$ as a functor $\fint^{op}\rightarrow \mathcal{V}\Quiv_{S}$ is equivalent to a collection of $\mathcal{V}$-quivers $(X_{n})_{n\geq 0}$ as well as quiver morphisms
\begin{align*}
d_{j} 
: X_{n}\longrightarrow X_{n-1}\quad \text{for all integers }0 < j < n\\
s_{i} 
: X_{n}\longrightarrow X_{n+1}\quad \text{for all integers }0\leq i\leq n
\end{align*}
called the \emph{inner face morphisms} and \emph{degeneracy morphisms} respectively, which satisfy the usual simplicial identities. The colax monoidal structure of $X$ provides it with \emph{comultiplications}, which are quiver morphisms
$$
\mu_{k,l}: X_{k+l}\longrightarrow X_{k}\otimes_{S} X_{l}
$$
for all $k,l\geq 0$ satisfying coassociativity and naturality conditions. Further, $X$ has a \emph{counit} $\epsilon: X_{0}\xlongrightarrow{\cong} I_{S}$, which is assumed to be an isomorphism by the strong unitality.

Note that, contrary to a simplicial object, $X$ does not have any outer face morphisms $d_{0}, d_{n}: X_{n}\longrightarrow X_{n-1}$. Instead, $X$ is equipped with the comultiplications $\mu_{k,l}$ which serve as a replacement for the outer faces in the (non-cartesian) monoidal context.

We end this section with the main motivating example for using templicial objects in $\vvv$ in order to model higher enriched categories.

\begin{Ex}\cite[\S 2.2]{LM1}\label{exnerve}
Let $\ccc$ be a $\vvv$-enriched category with object set $S$. The \emph{templicial nerve} of $\ccc$ is the templicial object $N_{\vvv}(\ccc)$ in $\vvv$ with
	\begin{equation}
		N_{\mathcal{V}}(\ccc)_n = \ccc^{\otimes_S n},
	\end{equation}
	with inner face maps induced by the multipliations of $\ccc$, degeneracies induced by the units of $\ccc$, and comultiplications given by the canonical isomorphisms $\ccc^{\otimes_S k+l} \cong \ccc^{\otimes_S k} \otimes_S \ccc^{\otimes_S l}$.
\end{Ex}

\subsection{Necklicial objects}\label{parnec}
 Let $\SSet_{*,*} = \SSet_{\partial\Delta^{1}/}$ denote the category of bipointed simplicial sets. The category $\nec$ is the full subcategory of $\SSet_{*,*}$ spanned by all \emph{necklaces}, that is sequences $\Delta^{n_{1}}\vee ...\vee \Delta^{n_{k}}$ of standard simplices, called \emph{beads}, which are glued at their endpoints (where $\vee$ denotes the wedge sum). Necklaces first appeared in \cite{baues1980geometry} (under a different name) and were later employed and popularised in \cite{dugger2011rigidification}. In \cite[Proposition 3.4]{LM1}, it was shown that $\nec$ admits the following combinatorial description:
\begin{itemize}
\item The objects of $\nec$ are all pairs $(T,p)$ with $p\geq 0$ an integer and $T\subseteq [p]$ a subset containing $\{0 < p\}$.
\item A morphism $(T,p)\rightarrow (U,q)$ in $\nec$ is a morphism $f: [p]\rightarrow [q]$ in $\fint$ such that $U\subseteq f(T)$.
\end{itemize}
Here, a subset $T = \{0 = t_{0} < t_{1} < ... < t_{k} = p\}$ of $[p]$ corresponds to the necklace $\Delta^{t_{1}}\vee \Delta^{t_{2}-t_{1}}\vee ...\vee \Delta^{p-t_{k-1}}$. For example, $T = \{0 < p\}$ is the single simplex $\Delta^{p}$ while $T = [p]$ is a sequence of edges. Then the wedge sum $\vee$ is given as follows:
$$
(T,p)\vee (U,q) = (T\cup (p + U), p+q)
$$
for any necklaces $(T,p)$ and $(U,q)$. This makes $\nec$ into a monoidal category with monoidal unit given by $(\{0\},0)$.

It will be useful to consider some particular classes of necklace maps.
We call a necklace map $f: (T,p)\rightarrow (U,q)$ \emph{inert} if the underlying morphism $f: [p]\rightarrow [q]$ in $\fint$ is the identity, and we call $f$ \emph{active} if $f(T) = U$. Every necklace map can be uniquely decomposed as an active map followed by an inert map.
Further, we call $f$ \emph{injective} if the underlying morphism $f: [p]\rightarrow [q]$ in $\fint$ is injective. 

Necklace maps allow us to treat the inner face maps and degeneracy maps of a templicial object $(X,S)$ on the one hand, and its comultiplications on the other hand, on the same footing. More concretely, given any necklace $T = \{0 = t_{0} < t_{1} < ... < t_{k} = p\}$, we set
$$
X_{T} = X_{t_{1}}\otimes_{S} X_{t_{2}-t_{1}}\otimes_{S} ...\otimes_{S} X_{p-t_{k-1}}\in \mathcal{V}\Quiv_{S}.
$$
This definition can be extended to a functor $X_{\bullet}: \nec^{op}\rightarrow \mathcal{V}\Quiv_{S}$ in which the active necklace maps parameterise the inner face maps and degeneracy maps of $X$ and the inert necklace maps parameterise its comultiplications (see \cite[Construction 3.9]{LM1}).

In particular, we can evaluate in any $a,b\in S$ to obtain a functor
\begin{equation}\label{diagram: hom-object of necklace cat. assoc. to temp. obj.}
X_{\bullet}(a,b): \nec^{op}\longrightarrow \mathcal{V}.
\end{equation}
In general, a functor $Y: \nec^{op}\rightarrow \mathcal{V}$ will be called a \emph{necklicial $\vvv$-object}.
The necklicial $\vvv$-objects $X_{\bullet}(a,b)$ associated to a templicial object $X$ in $\vvv$ are used in the definition of quasi-categories in $\vvv$, see \S \ref{parqcat}.

\begin{Rem}\label{remark: necklace cats.}
	The category $\Fun(\necop, \vvv)$ becomes monoidal for the Day convolution, and one may thus consider \emph{necklace categories}, that is categories enriched in $\Fun(\necop, \vvv)$. In \cite[Theorem 3.12]{LM1}, a fully faithful functor $(-)^{nec}: \ts\vvv \lra 
	\vvv \Cat_{\nec}$ is constructed, landing in the category $\vvv \Cat_{\nec}$ of small necklace categories. This functor is left adjoint to a functor 
	$(-)^{temp}:
	\vvv \Cat_{\nec} \lra \ts\vvv$.
\end{Rem}

\subsection{Quasi-categories in a monoidal category}\label{parqcat}

For the rest of this section, we assume that the forgetful functor $U = \mathcal{V}(I,-): \mathcal{V}\rightarrow \Set$ preserves and reflects regular epimorphisms. Note that this is the case for our main category of interest $\vvv = \Mod(k)$.

Given integers $0\leq j\leq n$, we denote by $\Delta^{n}$ and $\Lambda^{n}_{j}$ the standard $n$-simplex and the $j$th $n$-horn respectively. 

\begin{Def}\label{DefwK}
Let $Y: \nec^{op}\longrightarrow \mathcal{V}$ be a necklicial $\vvv$-object. We say that $Y$ is \emph{weak Kan} if for all $0 < j < n$ any lifting problem
\[\begin{tikzcd}
	{\tilde{F}(\Lambda^{n}_{j})_{\bullet}(0,n)} & Y \\
	{\tilde{F}(\Delta^{n})_{\bullet}(0,n)}
	\arrow[from=1-1, to=2-1]
	\arrow[from=1-1, to=1-2]
	\arrow[dashed, from=2-1, to=1-2]
\end{tikzcd}\]
where the vertical morphism is induced by the inclusion $\Lambda^{n}_{j}\subseteq \Delta^{n}$, has a solution in $\Fun(\necop, \mathcal{V})$. Here, $\tilde{F}$ is the free functor from \eqref{eqtildeF}. We call a templicial object $(X,S)$ in $\mathcal{V}$ a \emph{quasi-category in $\mathcal{V}$} if the functors $X_{\bullet}(a,b)$ are weak Kan for all $a,b\in S$. In this case, we refer to the elements of $S$ as the \emph{objects} of $X$ and to elements of $U(X_{1}(a,b))$ as \emph{morphisms} $a\rightarrow b$ in $X$.
\end{Def}

Let $[-,-]$ denote the canonical enrichment of $\Fun(\nec^{op},\mathcal{V})$ over $\mathcal{V}$. Then given a necklicial $\vvv$-object $Y$ and integers $0 < j < n$, we  write
$$
\Lambda^{j}_{n}Y = \left[\tilde{F}(\Lambda^{n}_{j})_{\bullet}(0,n),Y\right].
$$
It was observed in \cite[Proposition 5.1]{LM1} that
\begin{equation}\label{equation: horn as colimit}
\begin{aligned}
 (\Lambda^{n}_{j})_{\bullet}(0,n) &= \bigcup_{\substack{i=1\\ i\neq j}}^{n-1}\delta_{i}(\Delta^{n-1}_{\bullet}(0,n-1))\cup \bigcup_{k=1}^{n-1}(\Delta^{k}\vee \Delta^{n-k})_{\bullet}(0,n)\\
&\cong \colim_{\substack{f: (T,p) \hookrightarrow \Delta^n\\ f\neq \delta_{j} ,\id}}\!\!\!T_{\bullet}(0,p)
\end{aligned}
\end{equation}

as a subfunctor of $\Delta^{n}_{\bullet}(0,n)$. The colimit here is taken over the full subcategory of $(\nec_{/\Delta^n})^{op}$ spanned by all injective necklace maps $T\hookrightarrow \Delta^n$ except $\delta_{j}: \Delta^{n-1}\hookrightarrow \Delta^n$ and the identity on $\Delta^n$. It follows that
\begin{equation}\label{equation: horn limit}
\Lambda^{j}_{n}Y\cong \lim_{\substack{f: (T,p) \hookrightarrow \Delta^n\\ f\neq \delta_{j} ,\id}}\!\!\left[\tilde{F}(T)_{\bullet}(0,p),Y\right]\cong \lim_{\substack{f: (T,p) \hookrightarrow \Delta^n\\ f\neq \delta_{j} ,\id}}\!\!\!Y_{T}
\end{equation}
The following characterisation of the lifting property from Definition \ref{DefwK} was formulated for templicial objects in \cite[\S 4.1]{LM2}.

\begin{Prop}\label{propcharq}
Let $Y: \nec^{op}\rightarrow \mathcal{V}$ be a necklicial $\vvv$-object. The following are equivalent:
\begin{enumerate}
\item $Y$ is weak Kan;
\item the canonical morphism $Y_{n}\rightarrow \Lambda^{j}_{n}Y$ is a regular epimorphism for all $0 < j < n$.
\end{enumerate}
\end{Prop}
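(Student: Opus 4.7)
The plan is to translate the lifting condition of Definition \ref{DefwK} into a surjectivity statement on underlying sets, and then invoke the standing hypothesis that $U = \vvv(I,-)$ preserves and reflects regular epimorphisms. First, since $\tilde{F}$ is defined componentwise by the strong monoidal left adjoint $F$, the necklicial $\vvv$-object $\tilde{F}(K)_\bullet(0,n)$ is, for any simplicial set $K$, the pointwise application of $F$ to the necklicial set $K_\bullet(0,n)$. Applied to $K = \Delta^n$, which corresponds to the necklace $(\{0,n\},n)$, the necklicial set $\Delta^n_\bullet(0,n)$ is the representable $\Hom_\nec(-,\Delta^n)$. Hence enriched Yoneda together with the adjunction $F \dashv U$ yields a natural isomorphism
\[
[\tilde{F}(\Delta^n)_\bullet(0,n), Y] \;\cong\; Y_n
\]
in $\vvv$, under which the map induced by the inclusion $\Lambda^n_j \hookrightarrow \Delta^n$ is, by \eqref{equation: horn as colimit} and \eqref{equation: horn limit}, precisely the canonical morphism $Y_n \to \Lambda^j_n Y$.

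Next I would unpack the lifting condition for fixed $0 < j < n$. A solution of the displayed square in $\Fun(\necop, \vvv)$ is precisely a preimage, under the map of hom-sets
\[
\Hom_{\Fun(\necop,\vvv)}(\tilde{F}(\Delta^n)_\bullet(0,n), Y) \;\longrightarrow\; \Hom_{\Fun(\necop,\vvv)}(\tilde{F}(\Lambda^n_j)_\bullet(0,n), Y),
\]
of the given bottom-left morphism. Consequently the weak Kan condition at $(n,j)$ is equivalent to this map being surjective. Since the canonical enrichment of $\Fun(\necop,\vvv)$ over $\vvv$ satisfies $\Hom_{\Fun(\necop,\vvv)}(-,-) = U[-,-]$, the identifications of the previous step turn this into surjectivity of $U(Y_n) \to U(\Lambda^j_n Y)$. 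As regular epimorphisms in $\Set$ are precisely the surjections, the standing assumption that $U$ preserves and reflects regular epimorphisms makes this in turn equivalent to $Y_n \to \Lambda^j_n Y$ being a regular epimorphism in $\vvv$, which is (2).

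The essential work sits in the first step, namely making rigorous the representability statement $\tilde{F}(\Delta^n)_\bullet(0,n) \cong F \circ \Hom_\nec(-,\Delta^n)$ and verifying that, under the resulting Yoneda isomorphism, the map induced by the horn inclusion really agrees with the canonical map appearing in (2). This boils down to a short combinatorial check: a necklace map $(T,p) \to (\{0,n\},n)$ in $\nec$ is just a morphism $[p] \to [n]$ in $\fint$, and this set of morphisms matches the indexing of the tensor/coproduct decomposition of $\tilde{F}(\Delta^n)_T(0,n)$ coming from the definition of $\otimes_S$ and the strong monoidality of $F$. Once this matching is in place, the remainder of the argument is a purely formal chain of adjunctions combined with the hypothesis on $U$.
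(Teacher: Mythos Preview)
Your argument is correct and is precisely the natural one: identify $\tilde{F}(\Delta^n)_\bullet(0,n)$ with the $F$-image of the representable necklicial set at $\Delta^n$, use the enriched Yoneda/end formula to get $[\tilde{F}(\Delta^n)_\bullet(0,n),Y]\cong Y_n$ (which the paper itself uses tacitly in \eqref{equation: horn limit} via the identification $[\tilde{F}(T)_\bullet(0,p),Y]\cong Y_T$), and then translate the lifting condition into surjectivity of $U(Y_n)\to U(\Lambda^j_nY)$ via $\Hom = U[-,-]$, concluding by the standing hypothesis that $U$ preserves and reflects regular epimorphisms.

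As for comparison: the paper does not actually supply its own proof of this proposition. It is stated with a reference to \cite[\S 4.1]{LM2} and no \texttt{proof} environment follows. Your write-up is therefore more detailed than what the paper contains, and it aligns with the approach one would reconstruct from the surrounding text (in particular from \eqref{equation: horn as colimit}--\eqref{equation: horn limit}). There is nothing to correct; the only polish I would suggest is to state explicitly, rather than relegate to the final paragraph, that $\Delta^n_\bullet(0,n)\cong \nec(-,\Delta^n)$ as necklicial sets, since this is the one nontrivial input and the rest is formal.
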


The following examples of quasi-categories in $\vvv$ can be found in \cite[Corollary 5.11]{LM1} and \cite[Corollary 4.15]{LM2} respectively.

\begin{Prop}
The following are quasi-categories in $\vvv$:
\begin{enumerate}[1.]
	\item the nerve $N_{\mathcal{V}}(\ccc)$ of a $\vvv$-enriched category $\ccc$ (see Example \ref{exnerve});
	\item the free templicial object $\tilde{F}(X)$ in $\mathcal{V} = \Mod(k)$ for an ordinary quasi-category $X \in \SSet$.
\end{enumerate}
\end{Prop}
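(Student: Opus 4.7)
My plan is to verify, for both parts, condition~(2) of Proposition \ref{propcharq}: that the canonical morphism $Y_n \to \Lambda^j_n Y$ is a regular epimorphism for every $0 < j < n$, where $Y = N_{\vvv}(\ccc)_\bullet(a,b)$ for part~(1) and $Y = \tilde{F}(X)_\bullet(a,b)$ for part~(2). In each case I would identify this morphism in terms of the structure maps of the templicial object at hand, and then read off the required property.

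For part~(1), the essential observation is that the comultiplications $\mu_{k,l}\colon N_{\vvv}(\ccc)_{k+l} \to N_{\vvv}(\ccc)_k \otimes_S N_{\vvv}(\ccc)_l$ of the templicial nerve are \emph{isomorphisms}, being nothing but the canonical associators for $\otimes_S$. Translated to the necklicial picture from \eqref{diagram: hom-object of necklace cat. assoc. to temp. obj.}, the inert necklace maps act on $N_{\vvv}(\ccc)_\bullet(a,b)$ as isomorphisms, and every value $N_{\vvv}(\ccc)_T(a,b)$ is canonically identified with $\ccc^{\otimes_S p}(a,b)$ via the inert map $([p],p) \to (T,p)$. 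Using the active/inert factorization of necklace maps, the horn limit \eqref{equation: horn limit} reduces to a diagram whose compatibilities encode precisely the associativity and unitality of composition in $\ccc$. Unwinding, one obtains an isomorphism between this limit and $\ccc^{\otimes_S n}(a,b) = N_{\vvv}(\ccc)_n(a,b)$, and the canonical map of Proposition \ref{propcharq}(2) is this isomorphism---in particular a regular epimorphism.

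For part~(2), I would first identify the necklicial module $\tilde{F}(X)_\bullet(a,b)$ as the post-composition $F \circ X_\bullet(a,b)$ of the set-valued necklicial object attached to $X$; this uses the strong monoidality of $F$ in \eqref{eqF} (so that $F$ commutes with the $\otimes_S$ that enters the definition of $(-)_T$). Invoking the $\Set$-valued instance of Proposition \ref{propcharq} for the ordinary quasi-category $X$, viewed as a templicial object via $\SSet \cong \ts \Set$, the map $X_n(a,b) \to \Lambda^j_n X(a,b)$ is a surjection in $\Set$. Since $F$ preserves regular epimorphisms as a left adjoint, $F(X_n(a,b)) \to F(\Lambda^j_n X(a,b))$ is a regular epimorphism in $\vvv$. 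The remaining step is to identify $F(\Lambda^j_n X(a,b))$ with $\Lambda^j_n \tilde{F}(X)(a,b)$. For this I would exploit the colimit presentation \eqref{equation: horn as colimit} of $(\Lambda^n_j)_\bullet(0,n)$ together with the fact that $\tilde{F}$ preserves colimits, so that the enriched hom $[-, \tilde{F}(X)_\bullet(a,b)]$ converts this colimit into the limit defining $\Lambda^j_n \tilde{F}(X)(a,b)$; each term $[\tilde{F}(T)_\bullet(0,p), \tilde{F}(X)_\bullet(a,b)]$ of that limit should then be computed via the $F \dashv U$ adjunction to yield $F(X_T(a,b))$, matching the corresponding term in the $\Set$-valued horn limit after applying $F$.

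I expect the main obstacle to lie in part~(2): the identification $F(\Lambda^j_n X(a,b)) \cong \Lambda^j_n \tilde{F}(X)(a,b)$ does not follow from a general limit-preservation property of $F$ (which does not hold), and requires the careful use of the Day-convolution and enriched Yoneda structure sketched above. Part~(1), by contrast, is essentially formal once the comultiplications of the nerve are recognised as isomorphisms: the horn limit collapses to a single copy of $\ccc^{\otimes_S n}(a,b)$ by associativity and unitality of composition in $\ccc$.
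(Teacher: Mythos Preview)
The paper does not actually prove this proposition; it simply cites \cite[Corollary~5.11]{LM1} and \cite[Corollary~4.15]{LM2}. So there is no in-paper argument to compare against, and I assess your proposal on its own merits.

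Your plan for part~(1) is sound. Since the comultiplications of $N_{\vvv}(\ccc)$ are the associator isomorphisms, every inert necklace map acts invertibly on $N_{\vvv}(\ccc)_\bullet(a,b)$, and the horn limit \eqref{equation: horn limit} collapses so that the canonical morphism $N_{\vvv}(\ccc)_n(a,b)\to\Lambda^j_n N_{\vvv}(\ccc)(a,b)$ is an isomorphism. This is essentially the argument behind the cited result in \cite{LM1}.

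Your plan for part~(2), however, contains a genuine gap at exactly the point you flag. You correctly identify $\tilde{F}(X)_\bullet(a,b)\cong F\circ X_\bullet(a,b)$ and correctly compute, via enriched Yoneda,
\[
\Lambda^j_n\tilde{F}(X)(a,b)\;\cong\;\lim_T\bigl[\tilde{F}(T)_\bullet(0,p),\tilde{F}(X)_\bullet(a,b)\bigr]\;\cong\;\lim_T F\bigl(X_T(a,b)\bigr).
\]
But ``matching the corresponding term in the $\Set$-valued horn limit after applying $F$'' only shows that the two limit diagrams agree \emph{termwise}; it does not show that
\[
F\bigl(\lim_T X_T(a,b)\bigr)\;\cong\;\lim_T F\bigl(X_T(a,b)\bigr),
\]
which is what you need. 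The functor $F\colon\Set\to\Mod(k)$ does \emph{not} preserve finite limits (already pullbacks over a point fail: $F(A\times B)\not\cong F(A)\otimes_k F(B)$ unless one of $A,B$ is a singleton), so this comparison map is typically neither an isomorphism nor even an epimorphism. Neither the $F\dashv U$ adjunction nor the Day convolution supplies the missing step: both are tools for computing the individual terms, not for commuting $F$ past the limit. The actual proof in \cite{LM2} uses additional input specific to $\Mod(k)$ (in the spirit of the wings machinery of \S\ref{parwings}) and does not proceed by the identification you propose.
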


\subsection{Wings}\label{parwings}
We will also make use of a lifting property with respect to the \emph{wings} $W^{n}$ of a simplex $\Delta^{n}$ for $n\geq 2$, which are defined as the union of its two outer faces \cite{LM2}. Given a necklace $(T,n)$ and $0 < j < n$, the unique inert necklace map $T\hookrightarrow \{0 < n\}$ can thus be identified with a composite of inclusions of bipointed simplicial sets:
$$
T\subseteq W^{n}\subseteq \Lambda^{n}_{j}\subseteq \Delta^{n}.
$$

\begin{Def}\label{Defwings}
For $n\geq 2$, we write $W^{n}$ for the simplicial subset of $\Delta^{n}$ defined by
$$
W^{n}([m]) = \{f: [m]\rightarrow [n]\mid f(m) < n\text{ or }f(0) > 1\}\,\subseteq \, \Delta^{n}([m])
$$
for all $m\geq 0$. We call $W^{n}$ the \emph{wings} of $\Delta^{n}$. We say a functor $Y: \nec^{op}\rightarrow \mathcal{V}$ \emph{lifts wings} if for all $n\geq 2$, any lifting problem in $\Fun(\necop, \mathcal{V})$:
\[\begin{tikzcd}
	{\tilde{F}(W^{n})_{\bullet}(0,n)} & {Y} \\
	{\tilde{F}(\Delta^{n})_{\bullet}(0,n)}
	\arrow[from=1-1, to=1-2]
	\arrow[from=1-1, to=2-1]
	\arrow[dashed, from=2-1, to=1-2]
\end{tikzcd}\]
where the vertical morphism is induced by the inclusion $W^{n}\subseteq \Delta^{n}$, has a solution. We say that a templicial object $(X,S)$ in $\mathcal{V}$ \emph{lifts wings} if the functors $X_{\bullet}(a,b)$ lift wings for all $a,b\in S$.
\end{Def}

The following result was formulated for quasi-categories in $\vvv$ in \cite{LM2}.

\begin{Prop}\label{propwwK}
	Let $Y: \necop \lra \vvv$ be a necklicial $\vvv$-object. 
	\begin{enumerate}[1.]
	\item If $Y$ is weak Kan, then $Y$ lifts wings.
	\item For $\vvv = \Mod(k)$, $Y$ is weak Kan if and only if $Y$ lifts wings.
	\end{enumerate}	
\end{Prop}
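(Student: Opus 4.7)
Both directions are proved by induction on $n$, with base case $n=2$ immediate since $W^2 = \Lambda^2_1$, so ``wing-lift'' and ``weak Kan'' coincide in dimension $2$.

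For Part 1, assume $Y$ is weak Kan. I would analyze the colimit presentation~\eqref{equation: horn as colimit} together with the analogous description of $\tilde{F}(W^n)_\bullet(0,n)$. The sub-necklaces appearing in $\Delta^n$ but not in $W^n$ are exactly the bipointed single-bead inclusions $\Delta^p \hookrightarrow \Delta^n$, i.e.\ those indexed by subsets $V \subseteq [n]$ containing $\{0,n\}$; non-bipointed sub-simplices sit in an outer face and are thus already in $W^n$. My plan is then to factor the inclusion $\tilde{F}(W^n)_\bullet(0,n) \hookrightarrow \tilde{F}(\Delta^n)_\bullet(0,n)$ as a finite iterated pushout of inner horn inclusions of the form $\tilde{F}(\Lambda^p_i)_\bullet(0,p) \hookrightarrow \tilde{F}(\Delta^p)_\bullet(0,p)$ with $0<i<p$, attaching the missing bipointed sub-simplices in order of increasing dimension and in a fixed (say lexicographic) order within each dimension. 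At each stage one omits a specific inner face of the $\Delta^p$ being attached, so the attachment is the pushout of an inner horn inclusion, which is filled by weak Kan for $Y$. The main obstacle is the combinatorial bookkeeping of choosing these omitted inner faces coherently so that every such omitted face appears later as an attached bipointed face: this is the localisation to bipointed sub-simplices of the classical fact that $W^n \hookrightarrow \Delta^n$ is inner anodyne in $\SSet$.

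For Part 2, assume $\vvv = \Mod(k)$ and $Y$ lifts wings. Given $\xi = (y_T)_T \in \Lambda^j_n Y$, the wing-lift applied to the restriction of $\xi$ to $W_n Y := [\tilde{F}(W^n)_\bullet(0,n),Y]$ first yields an element $y^{(0)} \in Y_n$ with $\mu_T(y^{(0)}) = y_T$ for every multi-bead $T$. The discrepancy $\eta := \xi - (\mu_T(y^{(0)}))_T$ then lies in $\ker(\Lambda^j_n Y \to W_n Y)$, and is thus concentrated on the single-bead sub-simplices $f:\Delta^p \hookrightarrow \Delta^n$ ($p < n$, $f \neq \delta_j$), with each component $\eta_f \in \ker(Y_p \to W_p Y)$. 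It then suffices to produce $y^{(1)} \in Y_n$ with $\mu_f(y^{(1)}) = \eta_f$ for single-bead $f$ and $\mu_T(y^{(1)}) = 0$ on multi-beads: additivity in $\Mod(k)$ makes $y^{(0)} + y^{(1)}$ the desired horn lift.

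The construction of $y^{(1)}$ proceeds by a filtration in the spirit of Part 1, applied to the kernel sub-objects $\ker(Y_p \to W_p Y)$, combined with the inductive hypothesis (wing-lift is equivalent to weak Kan at all $m < n$). Concretely, one processes the single-bead faces in increasing dimension, at each stage using lower-dimensional weak Kan (already available by induction) to realise the local component $\eta_f$, and then using $k$-linearity to subtract off the discrepancy between the resulting local lift and the contributions already committed on the faces of $\Delta^p$. The main obstacle, as in Part 1, is coordinating these local lifts across all single-bead sub-simplices of $\Delta^n$ simultaneously, and here the $k$-module structure of $\vvv$ is essential: it is precisely the freedom to add and subtract that lets the accumulated errors be absorbed, which is why the converse implication in Part 2 is restricted to $\vvv = \Mod(k)$ rather than a general monoidal $\vvv$.
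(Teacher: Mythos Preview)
The paper does not prove this proposition directly; it defers Part~1 to \cite[Lemma~4.9]{LM2} and says Part~2 is analogous to \cite[Prop.~4.12]{LM2}. Your overall strategy---factor $W^n_\bullet(0,n)\hookrightarrow\Delta^n_\bullet(0,n)$ as a finite composite of pushouts of inner horn inclusions for Part~1, and for Part~2 first lift along the wings and then correct the residual single-bead discrepancies using $k$-linearity and induction---is the right shape and matches what those references do.

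There is, however, a genuine inconsistency in your filtration for Part~1. You propose to attach the missing bipointed single-bead simplices in order of \emph{increasing} dimension, omitting at each stage one inner face which ``appears later as an attached bipointed face''. But an inner face of a $p$-simplex has dimension $p-1$, so under an increasing-dimension ordering it has already been attached \emph{before} the $p$-simplex, not after; there is then nothing left to omit and you are forced to fill a full boundary rather than an inner horn. Concretely, for $n=3$ the bipointed single-bead simplices are $\{0,3\}$, $\{0,1,3\}$, $\{0,2,3\}$, $\{0,1,2,3\}$. One valid sequence is: attach $\{0,2,3\}$ via $\Lambda^2_1$ (its wings $\{0,2\}\vee\{2,3\}$ lie in $W^3$, and the omitted face $\{0,3\}$ is thereby created), then attach $\{0,1,2,3\}$ via $\Lambda^3_2$ (omitting $\{0,1,3\}$, which is thereby created). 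A $3$-simplex is attached before one of the $2$-simplices. The correct filtration, as in Joyal's proof that spine inclusions are inner anodyne or the argument in \cite{LM2}, is not by dimension alone; one typically pairs each non-maximal $V\subsetneq[n]$ containing $\{0,n\}$ with $V\cup\{j\}$ for a fixed inner $j$ and processes the pairs. You correctly flag this bookkeeping as ``the main obstacle'', but the ordering you actually wrote down does not resolve it and in fact contradicts your own description of when the omitted faces appear.

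Your sketch for Part~2 is sound: restricting a horn to its wings, lifting, and then inductively killing the component of the discrepancy in $\ker(\Lambda^j_nY\to W_nY)$ on single beads is exactly where additivity is needed, and this is why the converse is stated only for $\vvv=\Mod(k)$.
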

\begin{proof}
(1) immediately follows from \cite[Lemma 4.9]{LM2}. The proof of (2) is completely analogous to that of \cite[Prop. 4.12]{LM2}.
\end{proof}

From \cite[Proposition 4.8]{LM2}, we have for all $n\geq 2$: 
$$
W^{n}_{\bullet}(0,n) = \bigcup_{k=1}^{n-1}(\Delta^{k}\vee \Delta^{n-k})_{\bullet}(0,n)\cong \colim_{\substack{T\hookrightarrow \Delta^{n}\\ f\neq \id 
\text{ inert}}}T_{\bullet}(0,n)
$$
as a subfunctor of $\Delta^{n}_{\bullet}(0,n)$. Similarly to \eqref{equation: horn limit}, we put
$$
W_{n}Y = [\tilde{F}(W^{n})_{\bullet}(0,n),Y]\cong \lim_{\substack{T\hookrightarrow \Delta^{n}\\ f\neq \id 
\text{ inert}}}Y_{T}
$$
for any necklicial $\vvv$-object Y: $\nec^{op}\rightarrow \mathcal{V}$. Consequently, we have:

\begin{Prop}\label{propwingssurj}
Let $Y: \necop \lra \vvv$ be a necklicial $\mathcal{V}$-object. The following are equivalent.
\begin{enumerate}
\item $Y$ lifts wings;
\item the canonical morphism $Y_{n}\longrightarrow W_{n}Y$ is a regular epimorphism for all $n\geq 2$.
\end{enumerate}
\end{Prop}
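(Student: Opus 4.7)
The plan is to mimic the proof of Proposition \ref{propcharq}, replacing the horn $\Lambda^n_j$ by the wings $W^n$ throughout. The structure is entirely formal once one identifies morphisms in $\Fun(\necop, \vvv)$ from the relevant free templicial objects with the underlying sets of the enriched hom-objects $Y_n$ and $W_nY$.

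First I would unpack the lifting property. Since $F: \Set \lra \vvv$ is left adjoint to $U$ and preserves colimits, while $U$ preserves limits, one obtains natural bijections
\begin{align*}
\Hom_{\Fun(\necop,\vvv)}\bigl(\tilde{F}(\Delta^n)_\bullet(0,n),\, Y\bigr) &\cong U\bigl([\tilde{F}(\Delta^n)_\bullet(0,n),Y]\bigr) \cong U(Y_n), \\
\Hom_{\Fun(\necop,\vvv)}\bigl(\tilde{F}(W^n)_\bullet(0,n),\, Y\bigr) &\cong U\bigl([\tilde{F}(W^n)_\bullet(0,n),Y]\bigr) \cong U(W_nY).
\end{align*}
For the first identification I would use that $\Delta^n_\bullet(0,n)$ is represented by the necklace $(\{0<n\},n)$ in $\Fun(\necop,\Set)$, so $\tilde{F}(\Delta^n)_\bullet(0,n) \cong F \circ y_{\Delta^n}$, and standard Yoneda together with the $F \dashv U$ adjunction gives the claim. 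For the second, the colimit decomposition of $W^n_\bullet(0,n)$ recalled just before the proposition, together with the fact that $F$ preserves colimits, sends the limit formula for $W_nY$ to the required hom-set description.

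Next I would observe that under these identifications, the restriction map induced by $W^n \subseteq \Delta^n$ becomes precisely $U$ applied to the canonical morphism $Y_n \lra W_nY$. Hence the lifting property (1) is equivalent to saying that $U(Y_n) \lra U(W_nY)$ is surjective, i.e., a regular epimorphism in $\Set$.

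Finally I would invoke the standing hypothesis on $\vvv$: the functor $U = \vvv(I,-)$ preserves and reflects regular epimorphisms. Combined with the previous step, this immediately yields that (1) is equivalent to (2). The main obstacle — if any — is purely bookkeeping: one must carefully verify that $F$ sends the colimit diagram for $W^n_\bullet(0,n)$ to the correct colimit in $\Fun(\necop,\vvv)$, so that the enriched hom $W_nY$ really does classify morphisms from $\tilde{F}(W^n)_\bullet(0,n)$. This is essentially the same verification already carried out in \cite{LM2} for horns, so no genuinely new difficulty arises.
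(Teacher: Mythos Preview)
Your proposal is correct and matches the paper's approach: the paper simply states the proposition as an immediate consequence (``Consequently, we have:'') of the limit description of $W_nY$, implicitly invoking the same formal argument as Proposition~\ref{propcharq} with $W^n$ in place of $\Lambda^n_j$. Your write-up makes this argument explicit and the bookkeeping you flag is indeed routine.
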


\subsection{Deg-projective templicial modules}\label{pardegproj}

Let $(X,S)$ be a templicial $\vvv$-object. Following \cite{LM1}, we put for $n\geq 0$:
\begin{equation}
{{X}^{deg}_n} = \colim_{\substack{f:[n] \twoheadrightarrow [m] \\ f\neq \id }} X_m \in \vvv\Quiv_{S}
\end{equation}
where the colimit is taken over all non-identity surjective morphisms $[n] \rightarrow [m]$ in $\fint$. We can consider $X^{deg}_{n}$ as the quiver of degenerate $n$-simplices of $X$. Note that there is an associated canonical morphism $\can_n: X^{\deg}_n \lra X_n$.

\begin{Def}
A templicial object $(X,S)$ is called \emph{deg-projective} if for all $n\geq 0$, $\can_n$ has the left lifting property with respect to the class of regular epimorphisms.
\end{Def}

\begin{Ex}
Every simplicial set $X$ is deg-projective in $\SSet$ (considered as a templicial set), and $\tilde{F}(X)$ is deg-projective in $\ts\vvv$.
\end{Ex}

Since we are mainly interested in templicial modules, let us conveniently reformulate deg-projectivity in this context.

\begin{Ex}
Let $(X,S)$ be a templicial $k$-module. Note that $X$ is deg-projective if and only if $\can_{n}$ is a monomorphism with projective cokernel for all $n\geq 0$. Now consider the canonical exact sequence induced by $\can_n$:
\begin{equation}\label{eqcan0}
	\xymatrix{{E_X:} & {{X}^{deg}_n} \ar[r]^{{\can}_n} & {{X}_n} \ar[r]_{} & {{X}^{nd}_n} \ar[r] & 0
	}
\end{equation}
Then $X$ is deg-projective precisely if $E_X$ is split short exact for all $n\geq 0$.

In this case $X^{deg}_{n}$ really is the \emph{sub}quiver of $X_n$ of degenerate $n$-simplices. Similarly, we can consider $X^{nd}_{n}$ as the subquiver of non-degenerate $n$-simplices. In fact, by \cite[Lemma 2.19]{LM1} we have a version of the Eilenberg-Zilber lemma, that is,
\begin{equation}\label{eqEZ}
X_n\cong \bigoplus_{[n] \twoheadrightarrow [m]} X_m^{nd}
\end{equation}
where the direct sum is taken over all surjective morphisms $[n] \rightarrow [m]$ in $\fint$.
\end{Ex}

Let us end the section by making the distinction between deg-projectivity and levelwise projectivity for templicial modules.

\begin{Def}\label{definition: levelwise flat temp. obj.}
A templicial $k$-module $(X,S)$ is called \emph{levelwise projective} (resp. \emph{levelwise flat}) if $X_n(a,b)$ is a flat (resp. projective) $k$-module for all $n\geq 0$ and $a,b\in S$.
\end{Def}

Clearly any levelwise projective templicial $k$-module is levelwise flat.


\begin{Prop}\label{propdeglevel}
Any deg-projective templicial $k$-module $X$ is levelwise projective and $X^{deg}_{n}(a,b)$ is a projective $k$-module for all $n\geq 0$ and $a,b\in S$.
\end{Prop}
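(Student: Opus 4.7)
The plan is to deduce both claims from the splitting characterisation of deg-projectivity spelled out in the example just before the proposition, combined with the Eilenberg-Zilber decomposition \eqref{eqEZ}. From that characterisation, $X$ being deg-projective means that for every $n \geq 0$ the sequence $E_X$ is split short exact with projective cokernel $X^{nd}_n$. Since a regular epimorphism in $\vvv\Quiv_S$ is exactly a componentwise surjection, projectivity of the quiver $X^{nd}_n$ translates into the assertion that $X^{nd}_n(a,b)$ is a projective $k$-module for every $a,b \in S$.

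Given this, levelwise projectivity of $X$ follows immediately by reading \eqref{eqEZ} componentwise: one has $X_n(a,b) \cong \bigoplus_{[n]\twoheadrightarrow [m]} X^{nd}_m(a,b)$, a direct sum of projective $k$-modules, hence itself projective. Projectivity of $X^{deg}_n(a,b)$ is then equally immediate, because the splitting of $E_X$ exhibits $X^{deg}_n(a,b)$ as a direct summand of the projective $k$-module $X_n(a,b)$.

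I do not anticipate any real obstacle: once deg-projectivity has been unpacked into the split short exact sequence $E_X$ with $X^{nd}_n$ componentwise projective, both conclusions reduce to the stability of projectivity of $k$-modules under direct sums and direct summands. If one prefers an explicit description rather than an abstract summand argument, the same reasoning yields $X^{deg}_n(a,b) \cong \bigoplus_{f\colon [n]\twoheadrightarrow [m],\, f \neq \id} X^{nd}_m(a,b)$, after observing that the identity surjection $[n]\to [n]$ contributes precisely the summand $X^{nd}_n$ in \eqref{eqEZ} and matches, under the splitting of $E_X$, the complement of $X^{deg}_n$ inside $X_n$.
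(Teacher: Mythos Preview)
Your proof is correct and follows essentially the same approach as the paper: use the Eilenberg--Zilber decomposition \eqref{eqEZ} together with the componentwise projectivity of each $X^{nd}_m$ to conclude that $X_n(a,b)$ is projective, and then invoke the splitting $X_n \cong X^{deg}_n \oplus X^{nd}_n$ to get $X^{deg}_n(a,b)$ as a direct summand. The paper's argument is identical in substance, just more terse.
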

\begin{proof}
Suppose $X$ is deg-projective. By \eqref{eqEZ}, $X_n$ is projective and since $X_n \cong X^{deg}_n \oplus X^{nd}_n$, the direct summand $X^{deg}_n$ is projective too.
\end{proof}

\begin{Ex}
Certainly not every levelwise projective templicial $k$-module is deg-projective. Let $s_{0}: \mathbb{Z}\rightarrow \mathbb{Z}$ be the map given by multiplying by $2$. It can be extended to a templicial $\mathbb{Z}$-module with a single vertex $*$ by setting $X_{n}(*,*) = \mathbb{Z}$ for all $n\geq 0$ and setting all other degeneracy, inner face and comultiplication maps to be the identity on $\mathbb{Z}$. Then $X$ is clearly levelwise projective but $X^{nd}_{1}(*,*)\cong \mathbb{Z}/2\mathbb{Z}$ is not projective.
\end{Ex}

\section{Stability properties of necklicial modules}\label{parparstab}

In this section, we collect the main stability properties of necklicial modules that we will use in \S \ref{parpardefqcat}. We are mainly interested in the necklicial modules $X_{\bullet}(a,b)$ for objects $a,b \in S$ associated to a quasi-category $X$ in modules, and the extent to which the weak Kan property of $X_{\bullet}(a,b)$ is preserved.
First, we look into stability under the application of functors. In the generality of necklicial $\vvv$-objects, as the weak Kan property is expressed in terms of finite limits and colimits, we merely obtain preservation by exact functors (see \S \ref{parbasenec}). Specifying to a quasi-category $X$ in modules, in \S \ref{parqtens} we prove our key technical result making use of wings rather than horns. Precisely, in Theorem \ref{thmwingsmain} we show that if $X$ is levelwise flat over $k$, then for any $k$-module $N$ the external tensor products $X_{\bullet}(a,b) \otimes_k N$ are weak Kan.
Finally, in \S \ref{parextnec}, we show that the weak Kan property of necklicial $k$-modules is stable under extensions in $\Fun(\necop, \Mod(k))$.

\subsection{Base change for necklicial objects}\label{parbasenec}

In this section $\vvv$ and $\www$ are finitely bicomplete categories, but not necessarily monoidal. Note that because of \eqref{equation: horn as colimit}, the lifting condition of Definition \ref{DefwK} still makes sense in this context. Further recall that a functor is called \emph{exact} if it preserves finite limits and colimits. 

\begin{Prop}\label{propFpres}
Let $H:\vvv \lra \www$ be a functor and consider the induced
\begin{equation}
	H\circ -: \Fun(\necop, \vvv) \lra \Fun(\necop, \www).
\end{equation}
If $H$ is exact, then $H\circ -$ preserves weak Kan necklicial objects.
\end{Prop}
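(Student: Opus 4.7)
The plan is to reduce the weak Kan lifting condition to a single canonical arrow being a regular epimorphism, and then verify that exactness preserves both ingredients of this formulation. The starting point is the limit description from \eqref{equation: horn limit}: for any necklicial $\vvv$-object $Y$ one has
$$\Lambda^{j}_{n}Y \,\cong\, \lim_{\substack{f: (T,p) \hookrightarrow \Delta^n\\ f\neq \delta_{j}, \id}} Y_T,$$
and in the finitely bicomplete setting of this section the weak Kan lifting property is equivalent to the canonical morphism $Y_n \to \Lambda^j_n Y$ being a regular epimorphism for all $0<j<n$. This is the natural analogue of Proposition \ref{propcharq} that, as signalled in the preamble of Section \ref{parbasenec}, still makes sense in the present generality via \eqref{equation: horn as colimit}.

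The first thing I would verify is that the indexing category for the limit is \emph{finite}: subsets $T \subseteq [n]$ containing $\{0,n\}$ are finite in number, and for each such $T$ the injective endpoint-preserving maps $[p] \to [n]$ with the required image condition form a finite set. Hence $\Lambda^j_n Y$ is genuinely a finite limit of values of $Y$. Since $H$ preserves finite limits, the canonical comparison map $H(\Lambda^j_n Y) \to \Lambda^j_n(H \circ Y)$ is an isomorphism, and it fits into a commutative triangle with the canonical arrows out of $(H \circ Y)_n = H(Y_n)$.

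Finally, since $H$ preserves finite colimits, it in particular preserves coequalizers, and hence sends regular epimorphisms to regular epimorphisms. Applying $H$ to the regular epimorphism $Y_n \twoheadrightarrow \Lambda^j_n Y$ (which exists by hypothesis, for every $0 < j < n$) and composing with the isomorphism above yields the desired regular epimorphism $(H \circ Y)_n \twoheadrightarrow \Lambda^j_n(H \circ Y)$, so that $H \circ Y$ is weak Kan. There is no substantive obstacle: exactness provides exactly the two stability properties the condition calls for, namely finite-limit preservation (to identify the horn construction after applying $H$) and finite-colimit preservation (to transport the regular-epimorphism condition). Both are automatic from the finiteness of the horn diagram together with exactness.
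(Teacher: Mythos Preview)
Your proof is correct and follows essentially the same approach as the paper's. The paper's own proof is a single sentence pointing to Proposition~\ref{propcharq} and noting that the weak Kan property is expressed there in terms of finite limits and colimits; you have simply unpacked that sentence, making explicit that the horn diagram is finite, that finite-limit preservation yields $H(\Lambda^j_n Y)\cong \Lambda^j_n(H\circ Y)$, and that finite-colimit preservation sends regular epimorphisms to regular epimorphisms.
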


\begin{proof}
This follows from the fact that in Proposition \ref{propcharq} the property of being weak Kan is expressed in terms of finite limits and colimits.
\end{proof}

\subsection{Quasi-categories in modules under external tensoring}\label{parqtens}

In this section, we prove our key technical result, namely that for the necklicial $k$-modules associated to a levelwise flat templicial $k$-module, the weak Kan property is stable under tensoring externally by an arbitrary $k$-module. Recall that the tensor product of quivers is denoted by $\otimes_S$. In the sequel, we also consider the external tensor product $\otimes = \otimes_k$ over $k$ of a necklicial $k$-module $Y: \necop \lra \Mod(k)$ with a $k$-module $N$, given by $(Y \otimes N)_T = Y_T \otimes_k N$.

We make fundamental use of the wings from \S \ref{parwings}, or more specifically a truncated version thereof. Given integers $0\leq i < n$, let us denote
$$
W^{n}_{\leq i} = \bigcup_{k=1}^{i}(\Delta^{k}\vee \Delta^{n-k})_{\bullet}(0,n)\subseteq \Delta^{n}_{\bullet}(0,n).
$$
Note that $W^{n}_{\leq 0} = \emptyset$ and $W^{n}_{\leq n-1} = W^{n}_{\bullet}(0,n)$. Similarly to \eqref{equation: horn limit}, we set
$$
W_{n}^{\leq i}Y = [F(W^{n}_{\leq i}),Y]\cong \lim_{\substack{f:T {\hookrightarrow} \Delta^{n} \\ f\neq \id \text{ inert}\\ T\cap \{i+1,...,n-1\} = \emptyset}}Y_{T},
$$
for every necklicial $k$-module $Y: \nec^{op}\rightarrow \Mod(k)$. Further, for a functor $Y: \necop \longrightarrow k\Quiv_{S}$, we will also write $W_{n}^{\leq i}Y$ for the quiver given by $(W_{n}^{\leq i}Y)(a,b) = W_{n}^{\leq i}(Y(a,b))$ for all $a,b\in S$, where $Y(a,b)$ is the induced functor $\necop \longrightarrow \Mod(k)$ . We will mainly use this notation for $Y = X_{\bullet}\otimes_k N$, with $X$ a templicial object and $N$ a $k$-module.

\begin{Lem}\label{lemma: pushout of wedges}
Let $0 < i < n$ be integers. The following square is a pushout in $\Fun(\necop, \Set)$:
$$
\xymatrix{
\bigcup^{i-1}_{k=1}(\Delta^{k}\vee \Delta^{i-k}\vee \Delta^{n-i})_{\bullet}(0,n)\ar[r]\ar[d] & (\Delta^{i}\vee \Delta^{n-i})_{\bullet}(0,n)\ar[d]\\
W^{n}_{\leq i-1}\ar[r] & W^{n}_{\leq i}
}
$$
\end{Lem}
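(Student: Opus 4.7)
The plan is to verify the square pointwise. Since colimits in $\Fun(\necop,\Set)$ are computed objectwise, and since all four functors in the diagram sit as subfunctors of $\Delta^n_\bullet(0,n)$ via the displayed maps, it is enough to check that for every necklace $(T,p)$ the resulting square of subsets of $\Delta^n_\bullet(0,n)(T,p)$ is a pushout in $\Set$. A commutative square of inclusions of the shape $A \cap B \hookrightarrow A,\ A\cap B \hookrightarrow B,\ A \hookrightarrow A\cup B,\ B \hookrightarrow A\cup B$ is automatically a pushout, so the task reduces to showing that, with $A = W^n_{\leq i-1}(T,p)$ and $B = (\Delta^i\vee\Delta^{n-i})_\bullet(0,n)(T,p)$, the upper-left corner evaluates to $A\cap B$ and the bottom-right corner evaluates to $A\cup B$.

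First I would unpack each subfunctor combinatorially. Using the description of $\nec$ recalled in \S \ref{parnec}, a necklace map $(T,p) \to (\{0, j_1, \dots, j_r, n\},n)$ is the same datum as a morphism $f: [p] \to [n]$ in $\fint$ satisfying $\{j_1, \dots, j_r\} \subseteq f(T)$ (the conditions $0,n \in f(T)$ being automatic for $f\in \fint$). Consequently
\begin{align*}
(\Delta^k \vee \Delta^{n-k})_\bullet(0,n)(T,p) &= \{\, f \in \fint([p],[n]) : k \in f(T)\,\},\\
(\Delta^k \vee \Delta^{i-k} \vee \Delta^{n-i})_\bullet(0,n)(T,p) &= \{\, f \in \fint([p],[n]) : \{k,i\} \subseteq f(T)\,\}.
\end{align*}
Taking unions over $k$ yields pointwise descriptions of $W^n_{\leq i-1}(T,p)$, $W^n_{\leq i}(T,p)$, and of the top-left corner.

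With these formulas in hand, the identity $W^n_{\leq i} = W^n_{\leq i-1} \cup (\Delta^i \vee \Delta^{n-i})_\bullet(0,n)$ is immediate from the defining union. For the intersection, an $f$ lies in $A \cap B$ precisely when $i \in f(T)$ and there exists some $1 \le k \le i-1$ with $k \in f(T)$; this is exactly the condition that $\{k,i\}\subseteq f(T)$ for some such $k$, which is the pointwise description of $\bigcup_{k=1}^{i-1}(\Delta^{k}\vee \Delta^{i-k}\vee \Delta^{n-i})_{\bullet}(0,n)(T,p)$.

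The only delicate point is this combinatorial translation between subfunctor inclusions and set-theoretic conditions on the image $f(T)$; once it is in place, the pushout property in $\Fun(\necop,\Set)$ follows formally by passing back from the pointwise pushouts of unions of subsets.
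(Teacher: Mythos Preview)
Your proof is correct and follows essentially the same approach as the paper: both argue that since all four functors are subfunctors of $\Delta^n_\bullet(0,n)$, it suffices to verify that the top-left corner is the intersection of $W^n_{\leq i-1}$ and $(\Delta^i\vee\Delta^{n-i})_\bullet(0,n)$ (the union being $W^n_{\leq i}$ by definition). The paper leaves this as a ``straight forward verification'', whereas you spell out the combinatorics explicitly; the content is the same.
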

\begin{proof}
As subfunctors of $\Delta^{n}_{\bullet}(0,n)$, it suffices to note that $\bigcup^{i-1}_{k=1}(\Delta^{k}\vee \Delta^{i-k}\vee \Delta^{n-i})_{\bullet}(0,n)$ is precisely the intersection of $W^{n}_{\leq i-1}$ and $(\Delta^{i}\vee \Delta^{n-i})_{\bullet}(0,n)$, which is a straight forward verification.
\end{proof}

\begin{Lem}\label{lemma: pullback of wedges}
Let $(X,S)$ be a levelwise flat templicial $k$-module and $N$ a $k$-module. Then for all integers $0 < i < n$, the following square is a pullback in $k\Quiv_{S}$:
$$
\xymatrix{
W_{n}^{\leq i}(X_{\bullet}\otimes N)\ar[r]\ar[d] & (X_{i}\otimes_{S} X_{n-i})\otimes N\ar[d]\\
W_{n}^{\leq i-1}(X_{\bullet}\otimes N)\ar[r] & W_{i}(X_{\bullet}\otimes N)\otimes_{S} X_{n-i}
}
$$
\end{Lem}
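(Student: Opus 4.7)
The plan is to obtain the square as the image of the pushout from Lemma~\ref{lemma: pushout of wedges} under a contravariant hom-functor. Since $F: \Set \to \vvv$ is left adjoint, post-composition with $F$ preserves pushouts in $\Fun(\necop, -)$; and $[-, Y]$ sends colimits in its first variable to limits. Applying $[F(-), (X_\bullet \otimes N)(a,b)]$ to the pushout of Lemma~\ref{lemma: pushout of wedges} therefore yields, for each $a, b \in S$, a pullback square in $\Mod(k)$; these assemble (pullbacks in $k\Quiv_S$ being entry-wise) to a pullback in $k\Quiv_S$.

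Three of the four corners of the image are matched immediately. The top-left and bottom-left recover $W_n^{\leq i}(X_\bullet \otimes N)$ and $W_n^{\leq i-1}(X_\bullet \otimes N)$ by the definition of $W_n^{\leq \cdot}$. For the top-right, note that $(\Delta^i \vee \Delta^{n-i})_\bullet(0,n)$ is the representable necklicial set on the necklace $\{0 < i < n\}$, so the enriched Yoneda lemma identifies $[F((\Delta^i \vee \Delta^{n-i})_\bullet(0,n)), Y]$ with $Y_{\{0,i,n\}} = (X_i \otimes_S X_{n-i}) \otimes N$.

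The main step is to identify the fourth (bottom-right) corner with $W_i(X_\bullet \otimes N) \otimes_S X_{n-i}$. Arguing in parallel with the horn presentation \eqref{equation: horn as colimit}, the subfunctor $\bigcup_{k=1}^{i-1}(\Delta^k \vee \Delta^{i-k} \vee \Delta^{n-i})_\bullet(0,n) \subseteq \Delta^n_\bullet(0,n)$ is a colimit of representables $h_U$ indexed by necklaces of the form $U = T' \vee \Delta^{n-i}$ with $T'$ a non-trivial necklace of $\Delta^i$. Applying $[F(-), Y]$ and evaluating at $(a,b)$ therefore produces
\begin{equation*}
\lim_{T'}(X_{T'} \otimes_S X_{n-i} \otimes N)(a,b),
\end{equation*}
with $T'$ ranging over non-trivial necklaces of $\Delta^i$.

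On the other hand, unfolding $\otimes_S$ gives
\begin{equation*}
(W_i(X_\bullet \otimes N) \otimes_S X_{n-i})(a,b) = \bigoplus_{c \in S} \Bigl(\lim_{T'}(X_{T'} \otimes N)(a,c)\Bigr) \otimes_k X_{n-i}(c,b).
\end{equation*}
Levelwise flatness of $X$ makes each $X_{n-i}(c,b)$ flat over $k$, so $- \otimes_k X_{n-i}(c,b)$ commutes with the finite limit over $T'$; direct sums in $\Mod(k)$ are exact and hence commute with finite limits as well. Exchanging these two operations matches the right-hand side with $\lim_{T'}(X_{T'} \otimes_S X_{n-i} \otimes N)(a,b)$, completing the proof. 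This commutation step is the main obstacle and the only place where levelwise flatness is used; all else is formal.
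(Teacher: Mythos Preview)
Your proof is correct and follows essentially the same route as the paper: apply $[F(-), X_\bullet \otimes N]$ to the pushout of Lemma~\ref{lemma: pushout of wedges} to obtain the pullback, then identify the bottom-right corner by reindexing the limit over necklaces of the form $T'\vee \Delta^{n-i}$ and invoking flatness of $X_{n-i}(c,b)$ together with the commutation of direct sums with finite limits in $\Mod(k)$. The paper's proof is terser in handling the three immediate corners but otherwise identical in structure and content.
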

\begin{proof}
Apply the functor $[-,X_{\bullet}\otimes N]: \Fun(\necop, \Set) \rightarrow k\Quiv_{S}$ to the pushout of Lemma \ref{lemma: pushout of wedges} to obtain a pullback:
$$
\xymatrix{
W_{n}^{\leq i}(X_{\bullet}\otimes N)\ar[r]\ar[d] & (X_{i}\otimes_{S} X_{n-i})\otimes N\ar[d]\\
W_{n}^{\leq i-1}(X_{\bullet}\otimes N)\ar[r] & \underset{\substack{f:T {\hookrightarrow} \Delta^{i}\vee \Delta^{n-1} \\ f\neq \id \text{ inert}\\ T\cap \{i+1,...,n-1\} = \emptyset}}{\lim}\left(X_{T}\otimes N\right)
}
$$

Then note that we have isomorphisms of $k$-modules for all $a,b\in S$:
\begin{align*}
\lim_{\substack{f:T{\hookrightarrow} \Delta^{i}\vee \Delta^{n-i}\\ f\neq \id \text{ inert}\\ T\cap \{i+1,...,n-1\} = \emptyset}}\left(X_{T}(a,b)\otimes N\right) &\cong \lim_{\substack{f:U{\hookrightarrow} \Delta^{i}\\ f\neq \id \text{ inert}}}\left(\bigoplus_{c\in S}X_{U}(a,c)\otimes X_{n-i}(c,b)\otimes N\right)\\
&\cong \bigoplus_{c\in S}\lim_{\substack{f:U{\hookrightarrow} \Delta^{i}\\ f\neq \id \text{ inert}}}\left(X_{U}(a,c)\otimes N\right)\otimes X_{n-i}(c,b)
\end{align*}
The first isomorphism holds because every inert map $T\hookrightarrow \Delta^{i}\vee \Delta^{n-i}$ with $T\cap \{i+1,...,n-1\} = \emptyset$ can be written as $f\vee \id_{\Delta^{n-i}}$ for a unique inert map $f: U\hookrightarrow \Delta^{i}$. The second isomorphism holds because coproducts commute with finite limits in $\Mod(k)$, and by the flatness of $X_{n-i}(c,b)$. Therefore, we find that the bottom right quiver in the pullback above is isomorphic to $W_{i}(X_{\bullet}\otimes N)\otimes_{S} X_{n-i}$ as desired.
\end{proof}

\begin{Lem}\label{lemma: flat pullback}
\par Consider a pullback diagram of $k$-modules
\[
\begin{tikzcd}
P \arrow{r}\arrow{d} & A\arrow{d}{\mu}\\
B \arrow{r}{f} & C\\
\end{tikzcd}
\]
\noindent where $\mu$ is an epimorphism and $A$, $B$ and $C$ are flat. Then $P$ is flat and for all $k$-modules $N$,
\[
N\otimes P \cong N\otimes A \times_{N\otimes C} N\otimes B .
\]
\end{Lem}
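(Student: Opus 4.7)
The plan is to reduce both claims to the short exact sequence naturally associated to the pullback square. Since $\mu: A \twoheadrightarrow C$ is an epimorphism in $\Mod(k)$, the canonical sequence
\begin{equation*}
0 \longrightarrow P \longrightarrow A \oplus B \xrightarrow{\;(\mu,\,-f)\;} C \longrightarrow 0
\end{equation*}
is short exact: surjectivity on the right follows from $\mu$ being surjective, while the kernel is by definition the pullback $P$.

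Next I would establish flatness of $P$ using the long exact sequence of $\Tor$. For any $k$-module $L$, the above short exact sequence gives
\begin{equation*}
\Tor_2^k(C,L) \longrightarrow \Tor_1^k(P,L) \longrightarrow \Tor_1^k(A\oplus B, L).
\end{equation*}
Because $C$ is flat, $\Tor_i^k(C,L) = 0$ for all $i\geq 1$; because $A$ and $B$ are flat, so is $A\oplus B$, so $\Tor_1^k(A\oplus B, L) = 0$. Hence $\Tor_1^k(P,L) = 0$ for every $L$, proving $P$ is flat.

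For the second assertion, I would apply $N \otimes_k -$ to the short exact sequence. Flatness of $C$ ensures exactness is preserved, yielding
\begin{equation*}
0 \longrightarrow N \otimes P \longrightarrow (N\otimes A) \oplus (N \otimes B) \xrightarrow{\;(\id\otimes\mu,\,-\id\otimes f)\;} N\otimes C \longrightarrow 0,
\end{equation*}
where I have used that $N \otimes_k -$ distributes over finite direct sums. The right-hand map is exactly the one whose kernel defines the pullback of $N \otimes A \xrightarrow{\id \otimes \mu} N \otimes C \xleftarrow{\id \otimes f} N \otimes B$. Thus $N \otimes P \cong N\otimes A \times_{N\otimes C} N\otimes B$ as claimed.

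There is no real obstacle here; the only subtlety is to make sure the map $A\oplus B \to C$ in the pullback SES is actually surjective, which is precisely the role of the hypothesis that $\mu$ is an epimorphism. The flatness of $A$, $B$, $C$ serves a double purpose: flatness of $A \oplus B$ and the $\Tor$-vanishing of $C$ combine to force $P$ to be flat, while flatness of $C$ alone is what guarantees that tensoring preserves the short exact sequence, and hence the pullback.
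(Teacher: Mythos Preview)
Your argument is correct and follows the same route as the paper: both reduce to the short exact sequence $0 \to P \to A \oplus B \to C \to 0$ and use flatness of $A \oplus B$ and $C$ to conclude. The paper simply cites the Stacks Project for the flatness of $P$, whereas you spell out the $\Tor$ argument explicitly.
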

\begin{proof}
\par This readily follows from considering the short exact sequence
\[
\begin{tikzcd}
        0 \arrow{r} & P \arrow{r}& A \oplus B \arrow{r}{\mu-f} & C\arrow{r} & 0 .
\end{tikzcd}
\]
\noindent where both $A\oplus B$ and $C$ are flat (see e.g. \cite[Tags 00HL and 00HM]{stacks-project}). 
\end{proof}

\begin{Prop}\label{proposition: flat wings preserve tensor}
Let $(X,S)$ be a levelwise flat templicial $k$-module that lifts wings. Then for all integers $0\leq i < n$ the quiver $W_{n}^{\leq i}X$ is flat and for all $k$-modules $N$, we have an isomorphism of quivers:
$$
W_{n}^{\leq i}(X_{\bullet}\otimes N)\cong W_{n}^{\leq i}X\otimes N.
$$
In particular, $W_{n}(X_{\bullet}\otimes N)\cong W_{n}X\otimes N$.
\end{Prop}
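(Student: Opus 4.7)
The natural approach is induction on $i$, proving simultaneously for all $n \geq i+1$ that $W_n^{\leq i}X$ is a flat quiver and that the canonical map $W_n^{\leq i}X \otimes N \to W_n^{\leq i}(X_\bullet \otimes N)$ is an isomorphism. The base case $i = 1$ is immediate: pointwise, $W_n^{\leq 1}Y$ is given by $Y_{\Delta^1 \vee \Delta^{n-1}}$, which for $Y = X_\bullet$ (evaluated at $(a,b)$) unpacks to $\bigoplus_{c \in S} X_1(a,c) \otimes_k X_{n-1}(c,b)$; this is flat by levelwise flatness of $X$, and obviously distributes over the external tensor with $N$.

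For the inductive step, I would apply Lemma \ref{lemma: pullback of wedges} with $N = k$ to obtain a pullback square for $W_n^{\leq i}X$ in $k\Quiv_S$. The three ``non-pullback'' corners are $X_i \otimes_S X_{n-i}$ (flat by levelwise flatness), $W_n^{\leq i-1}X$ (flat by induction on $i$), and $W_i X \otimes_S X_{n-i}$. Crucially, $W_iX = W_i^{\leq i-1}X$ has index $i-1 < i$, so the inductive hypothesis at the pair $(n', i-1) = (i, i-1)$ yields flatness of $W_iX$, and then flatness of $W_iX \otimes_S X_{n-i}$ follows from levelwise flatness of $X$. Moreover, the right vertical map of the pullback factors as the canonical map $X_i \to W_iX$ tensored externally with $X_{n-i}$ and then with $N$; since $X$ lifts wings, Proposition \ref{propwingssurj} guarantees $X_i \to W_iX$ is a regular epimorphism (pointwise, hence in $k\Quiv_S$), and tensoring preserves surjections. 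Thus all hypotheses of Lemma \ref{lemma: flat pullback} are satisfied, and because flatness, pullbacks and epimorphisms in $k\Quiv_S$ are all detected pointwise at each $(a,b) \in S \times S$, we may apply the lemma entry-wise.

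Applying Lemma \ref{lemma: flat pullback} then gives on the one hand that $W_n^{\leq i}X$ is flat, and on the other hand that external tensoring by $N$ commutes with the pullback. Combining the latter with the pullback square of Lemma \ref{lemma: pullback of wedges} for $X_\bullet \otimes N$, together with the inductive hypothesis applied to the two ``wing'' corners (giving $W_n^{\leq i-1}(X_\bullet \otimes N) \cong W_n^{\leq i-1}X \otimes N$ and $W_i(X_\bullet \otimes N) \cong W_iX \otimes N$), identifies this pullback with $W_n^{\leq i}X \otimes N$, completing the induction. The final assertion follows by setting $i = n-1$, since $W_n^{\leq n-1}Y = W_nY$. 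The only genuinely delicate bookkeeping is making sure the induction on $i$ covers all $n$ at once, so that the auxiliary case $W_iX = W_i^{\leq i-1}X$ is available when needed.
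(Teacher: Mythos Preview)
Your proof is correct and takes essentially the same approach as the paper, combining Lemmas~\ref{lemma: pullback of wedges} and~\ref{lemma: flat pullback} in an induction. The only differences are organizational: the paper starts from the trivial base case $i=0$ (which the statement includes and you omit, though it is immediate since $W_n^{\leq 0}X = 0$) and inducts on pairs $(i,n)$ rather than on $i$ alone for all $n$ simultaneously.
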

\begin{proof}
We prove the statement by induction on $(i,n)$. The case $i = 0$ is trivial since $W_{n}^{\leq 0}X = 0$. Now let $0 < i < n$. We assume that we have the statement for all $(j,n)$ with either $m < n$, or $m = n$ and $j < i$.

Consider the pullback from Lemma \ref{lemma: pullback of wedges} with $N = k$:
$$
\xymatrix{
W_{n}^{\leq i}X\ar[r]\ar[d] & X_{i}\otimes_{S} X_{n-i}\ar[d]\\
W_{n}^{\leq i-1}X\ar[r] & W_{i}X\otimes_{S} X_{n-i}
}
$$
Since $X$ is levelwise flat and lifts wings, it follows from the induction hypothesis and Lemma \ref{lemma: flat pullback} that $W_{n}^{\leq i}X$ is again flat. Moreover, we obtain a pullback diagram:
$$
\xymatrix{
W_{n}^{\leq i}X\otimes N\ar[r]\ar[d] & \left(X_{i}\otimes_{S} X_{n-i}\right)\otimes N\ar[d]\\
W_{n}^{\leq i-1}X\otimes N\ar[r] & \left(W_{i}X\otimes_{S} X_{n-i}\right)\otimes N
}
$$
It follows from the induction hypothesis that we have isomorphisms of quivers
$$
W_{n}^{\leq i-1}X\otimes N\cong W_{n}^{\leq i}(X_{\bullet}\otimes N) \text{ and } (W_{i}X\otimes_{S} X_{n-i})\otimes N\cong W_{i}(X_{\bullet}\otimes N)\otimes_{S} X_{n-i}
$$
and thus the above pullback is precisely the pullback from Lemma \ref{lemma: pullback of wedges} (for arbitrary $N$). Hence, we obtain $W_{n}^{\leq i}X\otimes N\cong W_{n}^{\leq i}(X_{\bullet}\otimes N)$.
\end{proof}

\begin{Thm}\label{thmwingsmain}
Let $(X,S)$ be a levelwise flat quasi-category in $\Mod(k)$. For any $k$-module $N$ and $a,b\in S$, $X_{\bullet}(a,b)\otimes_k N$ is a weak Kan necklicial $k$-module.
\end{Thm}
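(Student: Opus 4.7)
The plan is to pass from horns to wings and then use the explicit tensor-compatibility of wings established in Proposition \ref{proposition: flat wings preserve tensor}. Since $\vvv = \Mod(k)$, by Proposition \ref{propwwK}(2) it is enough to show that $X_\bullet(a,b) \otimes_k N$ lifts wings, and by Proposition \ref{propwingssurj} this is equivalent to the canonical morphism
\[
(X_n(a,b) \otimes_k N) \longrightarrow W_n\bigl(X_\bullet(a,b) \otimes_k N\bigr)
\]
being a regular (i.e.\ surjective) epimorphism for every $n \geq 2$.

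Here is where the key technical input enters. Since $(X,S)$ is a quasi-category in $\Mod(k)$, each $X_\bullet(a,b)$ is weak Kan, hence by Proposition \ref{propwwK}(1) it lifts wings; combined with the levelwise flatness of $X$, the hypotheses of Proposition \ref{proposition: flat wings preserve tensor} are satisfied. That proposition, applied to the necklicial $k$-module $X_\bullet(a,b)$, yields an isomorphism
\[
W_n\bigl(X_\bullet(a,b) \otimes_k N\bigr) \;\cong\; W_n\bigl(X_\bullet(a,b)\bigr) \otimes_k N,
\]
and under this identification the canonical morphism in question is simply the tensor product over $k$ of the canonical morphism $X_n(a,b) \to W_n(X_\bullet(a,b))$ with the identity on $N$.

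Finally, because $X_\bullet(a,b)$ itself lifts wings, Proposition \ref{propwingssurj} tells us that $X_n(a,b) \to W_n(X_\bullet(a,b))$ is already a regular epimorphism of $k$-modules. The functor $-\otimes_k N$ is right exact, so it preserves surjections; therefore the tensored map is still surjective, which is what we needed. Applying Propositions \ref{propwingssurj} and \ref{propwwK}(2) in reverse concludes the argument.

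In this scheme essentially no obstacle remains: Proposition \ref{proposition: flat wings preserve tensor} does all the work, and the only thing to verify is that its hypotheses are inherited from the weak Kan and levelwise flat assumptions on $X$, and that right-exactness of $-\otimes_k N$ handles the final surjectivity. The real content—namely, that the wings functor commutes with external tensor products when the templicial module is levelwise flat and lifts wings—has been carried out inductively in the preceding lemmas via the pushout square of Lemma \ref{lemma: pushout of wedges} and the flat pullback lemma (Lemma \ref{lemma: flat pullback}), so the present theorem is a clean corollary.
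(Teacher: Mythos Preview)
Your proof is correct and follows exactly the paper's approach: reduce via Propositions \ref{propwwK} and \ref{propwingssurj} to the surjectivity of $X_n(a,b)\otimes N \to W_n(X_\bullet(a,b)\otimes N)$, then invoke Proposition \ref{proposition: flat wings preserve tensor} to identify this with the tensor of the already-surjective map $X_n(a,b)\to W_n(X_\bullet(a,b))$ by $N$. The paper's proof is terser but the underlying logic is identical.
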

\begin{proof}
Let $n\geq 2$. In view of Propositions \ref{propwwK} and \ref{propwingssurj}, we have that $X_{n}(a,b)\longrightarrow W_{n}(X)(a,b)$ is surjective and it suffices to show that $$X_{n}(a,b)\otimes N\longrightarrow W_{n}(X_{\bullet}(a,b)\otimes N)$$ is surjective as well. The latter is now clear from Proposition \ref{proposition: flat wings preserve tensor}.
\end{proof}

\subsection{Extensions of weak Kan necklicial objects}\label{parextnec}

The main goal of this section is to show that for $\vvv$ a (not necessarily monoidal) abelian category, the class of weak Kan necklicial $\vvv$-objects is closed under extensions in the category $\Fun(\necop, \vvv)$.
We start with a more general result formulated in an arbitrary abelian category $\aaa$.

\begin{Lem}\label{lemZ}
Let $\aaa$ be an abelian category. Let $\mmm$ be a class of monomorphisms with projective codomains in $\aaa$. Let $\zzz$ denote the class of the cokernels of all morphisms in $\mmm$ and let $\www$ denote the class of objects with the right lifting property with respect to $\mmm$.
Consider an object $X \in \aaa$. The following are equivalent:
\begin{enumerate}
\item $X \in \www$;
\item $\Ext^1(Z,X) = 0, \,\,\, \forall Z \in \zzz$.
\end{enumerate}	
\end{Lem}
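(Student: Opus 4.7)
The plan is to extract the lifting condition from the Ext long exact sequence associated to the defining short exact sequence of each $m \in \mmm$. More precisely, any monomorphism $m \colon A \hookrightarrow P$ in $\mmm$ fits into a short exact sequence $0 \to A \xrightarrow{m} P \to Z \to 0$ in $\aaa$ with $Z = \coker(m) \in \zzz$ and $P$ projective by hypothesis. Applying $\Hom_\aaa(-,X)$ yields the long exact sequence
$$
0 \to \Hom(Z,X) \to \Hom(P,X) \xrightarrow{m^{*}} \Hom(A,X) \to \Ext^{1}(Z,X) \to \Ext^{1}(P,X),
$$
and the last term vanishes because $P$ is projective. Hence $\Ext^{1}(Z,X) \cong \coker(m^{*})$.

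The key observation is that $X$ has the right lifting property against $m$ if and only if every morphism $A \to X$ extends along $m$ to a morphism $P \to X$, which is precisely surjectivity of $m^{*}$. Both implications of the lemma then follow: for $(1) \Rightarrow (2)$, any $Z \in \zzz$ is by definition the cokernel of some $m \in \mmm$, and the RLP of $X$ against this $m$ forces $m^{*}$ to be surjective, hence $\Ext^{1}(Z,X) = 0$; conversely, for $(2) \Rightarrow (1)$, any $m \in \mmm$ has cokernel $Z \in \zzz$ with $\Ext^{1}(Z,X) = 0$, so $m^{*}$ is surjective and every map $A \to X$ lifts along $m$.

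There is no serious obstacle here; the argument is essentially a direct unpacking of the Ext long exact sequence, relying only on the standard interpretation of $\Ext^{1}$ as the obstruction to extending morphisms and on the vanishing $\Ext^{1}(P,-) = 0$ for projective $P$. The only small point to mind is the slight asymmetry between the quantifier ``for all $m \in \mmm$'' in (1) and ``for all $Z \in \zzz$'' in (2), but this is reconciled by the definition of $\zzz$ as exactly the cokernels of morphisms in $\mmm$, so that the two quantifications correspond bijectively up to the choice of presenting monomorphism (which plays no role in the conclusion since the argument applies uniformly to any $m$ with a given cokernel).
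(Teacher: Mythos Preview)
Your proof is correct and follows essentially the same route as the paper: pass from each $m\in\mmm$ to the short exact sequence $0\to A\to P\to Z\to 0$, apply $\Hom(-,X)$, use $\Ext^{1}(P,X)=0$ by projectivity, and identify the right lifting property with surjectivity of $m^{*}$, hence with $\Ext^{1}(Z,X)=0$. Your additional remark on the quantifier correspondence is fine but not needed for the argument.
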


\begin{proof}
	For every $f: A \lra P$ in $\mmm$, consider the short exact sequence
	$0 \lra A \lra P \lra Z \lra 0$ with $Z \in \zzz$. The associated long exact sequence for $\Ext^{\ast}(-,X)$ features 
	$$\Hom(P,X) \lra \Hom(A,X) \lra \Ext^1(Z,X) \lra \Ext^1(P,X).$$
	By definition we have $X \in \www$ if and only if the first map is always surjective.
	By projectivity of $P$, we have $\Ext^1(P,X) = 0$ and so $X \in \www$ if and only if $\Ext^1(Z,X) =0$.
\end{proof}
	
\begin{Prop}\label{propextab}
Let $\aaa$ be an abelian category. Let $\mmm$ be a class of monomorphisms with projective codomains in $\aaa$ and let $\www$ denote the class of objects with the right lifting property with respect to $\mmm$. Then $\www$ is closed under extensions.
	\end{Prop}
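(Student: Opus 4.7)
The plan is to reduce the statement to the $\Ext^1$-criterion provided by Lemma \ref{lemZ} and then exploit the associated long exact sequence. Specifically, let $\zzz$ denote the class of cokernels of morphisms in $\mmm$ as in Lemma \ref{lemZ}, so that for any $X \in \aaa$ we have $X \in \www$ if and only if $\Ext^1(Z,X) = 0$ for every $Z \in \zzz$.

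Given a short exact sequence $0 \to X' \to X \to X'' \to 0$ in $\aaa$ with $X', X'' \in \www$, I would fix an arbitrary $Z \in \zzz$ and apply the functor $\Ext^{\ast}(Z,-)$. The resulting long exact sequence contains the fragment
\[
\Ext^1(Z, X') \lra \Ext^1(Z, X) \lra \Ext^1(Z, X'').
\]
By Lemma \ref{lemZ} applied to $X'$ and $X''$, both outer terms vanish, whence $\Ext^1(Z,X) = 0$. Since $Z \in \zzz$ was arbitrary, the reverse direction of Lemma \ref{lemZ} then yields $X \in \www$.

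The only point requiring mild care is the use of $\Ext^1$ in a general abelian category without assuming enough projectives or injectives: one should interpret $\Ext^1(Z,-)$ via Yoneda extensions (equivalence classes of short exact sequences), for which the long exact sequence associated to a short exact sequence in the second argument is still available. This is the same interpretation implicitly used in the proof of Lemma \ref{lemZ}, so no extra hypothesis on $\aaa$ is needed. I do not foresee a genuine obstacle; the proposition is essentially a formal consequence of Lemma \ref{lemZ} combined with the long exact sequence for $\Ext$.
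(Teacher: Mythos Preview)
Your proposal is correct and follows essentially the same approach as the paper, which simply states that the result is immediate from Lemma \ref{lemZ}. You have spelled out the standard long exact sequence argument implicit in that ``immediate'', and your remark about interpreting $\Ext^1$ via Yoneda extensions is a reasonable clarification consistent with how the lemma is proved.
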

	
	\begin{proof}
 Immediate from Lemma \ref{lemZ}.
	\end{proof}

\begin{Prop}\label{propextKan}
	Let $\vvv$ be an abelian category. The class of weak Kan necklicial $\vvv$-objects is closed under extensions in $\Fun(\necop, \vvv)$.
\end{Prop}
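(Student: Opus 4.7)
The plan is to deduce the proposition from Proposition \ref{propextab} applied to the abelian category $\Fun(\necop, \vvv)$, with $\mmm$ taken to be the class of horn inclusions $\iota_{n,j}: \tilde{F}(\Lambda^n_j)_\bullet(0,n) \lra \tilde{F}(\Delta^n)_\bullet(0,n)$ for $0 < j < n$. By Definition \ref{DefwK}, the weak Kan necklicial $\vvv$-objects coincide precisely with the objects having the right lifting property with respect to $\mmm$, so the conclusion will follow once I verify that each $\iota_{n,j}$ is a monomorphism and that each codomain $\tilde{F}(\Delta^n)_\bullet(0,n)$ is projective in the functor category.

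For the monomorphism property, I would invoke \eqref{equation: horn as colimit}, which exhibits $(\Lambda^n_j)_\bullet(0,n)$ as a subfunctor of $\Delta^n_\bullet(0,n)$ in $\Fun(\necop, \Set)$; the free functor $F$ then preserves this pointwise inclusion since $F(T) \cong F(S) \oplus F(T \setminus S)$ for any injection $S \hookrightarrow T$. For the projectivity of the codomain, I would first identify $\Delta^n_\bullet(0,n) \cong \Hom_\nec(-, \Delta^n)$ in $\Fun(\necop, \Set)$: by inspection, a necklace map $(T,p) \lra \Delta^n$ starting at $0$ and ending at $n$ is exactly a morphism $[p] \lra [n]$ in $\fint$, as the containment $\{0,n\} \subseteq f(T)$ becomes automatic. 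Consequently $\tilde{F}(\Delta^n)_\bullet(0,n)$ is the free necklicial $\vvv$-object on a representable, and combining the adjunction $F \dashv U$ with the Yoneda lemma produces a natural isomorphism
$$\Hom_{\Fun(\necop, \vvv)}\bigl(\tilde{F}(\Delta^n)_\bullet(0,n), Y\bigr) \cong \vvv(I, Y_{\Delta^n}).$$
Evaluation at $\Delta^n$ is exact since limits and colimits in $\Fun(\necop, \vvv)$ are computed pointwise, and $\vvv(I,-) = U$ is exact by the standing hypothesis of \S \ref{parqcat} that $U$ preserves regular epimorphisms (in the abelian setting, every epimorphism is regular, so this amounts to $I$ being projective). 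Hence $\tilde{F}(\Delta^n)_\bullet(0,n)$ is projective, and Proposition \ref{propextab} yields the result.

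The main delicate point is the projectivity step, which hinges on unpacking the free templicial structure in terms of the representable $\Hom_\nec(-, \Delta^n)$ and on invoking the exactness of $U$ to transfer projectivity from $\vvv$ to the functor category. A more direct approach via Proposition \ref{propcharq}---trying to show by a snake-lemma argument on a short exact sequence $0 \lra Y' \lra Y \lra Y'' \lra 0$ that $\Lambda^j_n Y \lra \Lambda^j_n Y''$ is epic---would encounter the difficulty that the finite limit defining $\Lambda^j_n$ is not generally right exact, so the route via Proposition \ref{propextab} appears cleaner.
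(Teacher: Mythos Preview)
Your proposal is correct and follows exactly the paper's route: apply Proposition~\ref{propextab} to $\aaa = \Fun(\necop,\vvv)$ with $\mmm$ the inner horn inclusions. The paper's one-line proof leaves the hypotheses of Proposition~\ref{propextab} (monomorphism, projective codomain) implicit; your argument---identifying $\Delta^{n}_{\bullet}(0,n)\cong \Hom_{\nec}(-,\Delta^{n})$ and deducing projectivity via $F\dashv U$, Yoneda, and exactness of $U$---fills this in correctly.
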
	

\begin{proof}
	This follows from Proposition \ref{propextab} for $\aaa = \Fun(\necop, \vvv)$ by taking $\mmm$ to consist of the images of the inner horn inclusions $(\Lambda^n_j)_{\bullet}(0,n) \lra \Delta^n_{\bullet}(0,n)$ for all $0<j<n$ under the functor 
	$F\circ -: \Fun(\necop, \Set) \lra \Fun(\necop, \vvv)$ induced by $F: \Set \lra \vvv$ from \eqref{eqF}. For this choice, $\www$ is precisely the class of weak Kan necklicial $\vvv$-objects.
\end{proof}

\section{Deformations of quasi-categories in modules}\label{parpardefnec}
\label{parpardefqcat}

In this section, we consider levelwise flat infinitesimal deformations of templicial modules (see \S \ref{pardefsetup} for our precise deformation setup, which encompasses deformations in the direction of Artinian local $k$-algebras over a field $k$).
In \S \ref{pardefqcat}, making use of the stability properties from \S \ref{parparstab}, we prove that the property of being a quasi-category in modules is preserved under deformation (Theorem \ref{thmmainq}). Finally, in \S \ref{pardefdegproj}, inspired by the well known preservation of projectivity for modules, we show that deg-projectivity of templicial modules is preserved under deformation (Theorem \ref{propdegproj}).

It is possible to deform templicial objects in suitable monoidal abelian categories rather than module categories, making use of the deformation theory of abelian categories from \cite{lowen2006deformation}. Details will appear elsewhere.

\subsection{Deformation setup} \label{pardefsetup}
Let $\theta: R \lra k$ be a ring homomorphism for a commutative ring $R$. 
This gives rise to a forgetful functor $\Mod(k) \lra \Mod(R)$
with left adjoint $k \otimes_R -: \Mod(R) \lra \Mod(k)$.
In order to develop infinitesimal deformation theory along $\theta$, we make some further assumptions. 

Firstly, we assume $\theta$ to be surjective. This in particular ensures that the functor $\Mod(k) \lra \Mod(R)$ is fully faithful. We will usually refrain from making the forgetful functor explicit in our notations, simply referring to ``$k$-modules considered as $R$-modules'' instead.

Secondly, we assume $I = \mathrm{Ker}(\theta)$ to be nilpotent, that is, $I^n = 0$ for some $n \geq 1$. 
When deforming $k$-modules into $R$-modules, it is customary to require flatness over the respective rings in order to control both the spaces of deformations and the properties of deformations. Note that by change of rings, a flat $R$-module $M$ gives rise to the flat $k$-module $k \otimes_R M$. For a mathematical object defined over a commutative ground ring and comprised of a collection of modules equipped with additional structure, one may likewise impose ``levelwise flatness'' - i.e. flatness of the individual modules involved - as a natural requirement for deformation theory (see Definition \ref{definition: levelwise flat temp. obj.}).

Further note that any surjective ring map $\theta: R \lra k$ with nilpotent kernel as above can be factored as a composition of ``small'' such maps (that is maps with square zero kernel) $\theta_i: R_{i+1} \lra R_{i}$ for $i = 0, \dots, n-2$ with $R_0 = k$ and $R_{n-1} = R$. Hence, a flat $R$-module $M$ gives rise to intermediate deformations $R_{i+1} \otimes_R M$ of $R_i \otimes_R M$ along $\theta_i$. As a consequence, to prove that a property is preserved by (levelwise) flat deformations, it suffices to consider the case where $I^2 =0$.

Note that our general setup applies in particular to classical infinitesimal deformations, where $R$ is an Artinian local $k$-algebra with maximal ideal $I$ and residue field $k = R/I$.

\subsection{Deformations of quasi-categories in modules} \label{pardefqcat}

In order to consider base change for templicial objects,
we have to be more restrictive in the functors we use compared to the situation for necklicial objects in \S \ref{parparstab}.  The reason is that templicial objects are colax monoidal functors, and should be composed with functors of the same type. 

We will be interested in deforming templicial modules with a fixed set of vertices. For a set $S$, we thus consider
$$
\ts\Mod(k)_S\cong \Colax_{su}(\fint^{\op}, k\Quiv_S).
$$
the category of strongly unital, colax monoidal functors $\fint^{\op}\rightarrow k\Quiv_S$ and monoidal natural transformations between them.

Let $\theta: R \lra k$ be a ring homomorphism as in \S \ref{pardefsetup}.
Since $k \otimes_R -: \Mod(R) \lra \Mod(k)$ is strong monoidal, there is an induced functor
\begin{equation}\label{eqtenscolax}
	k \otimes_R -: \ts\Mod(R)_S \lra \ts\Mod(k)_S
\end{equation}
with $(k \otimes_R X)_n(a,b) = k \otimes_R (X_n(a,b))$ for $n \in \N$ and $a,b \in S$.

\begin{Rem}
Note that the right adjoint of $k \otimes_R -: \Mod(R)\rightarrow \Mod(k)$ is lax monoidal but not colax monoidal, and does not give rise to a post-composition type functor  $\ts\Mod(k) \lra \ts\Mod(R)$. However, it does induce a forgetful functor $k\Cat_{\nec}\rightarrow R\Cat_{\nec}$ of necklace categories (see  Remark \ref{remark: necklace cats.}). Then $k \otimes_R -: \ts\Mod(R) \lra \ts\Mod(k)$ still has a right-adjoint by \cite{mertens2023nerves}, given by the composite $\ts\Mod(k)\xrightarrow{(-)^{nec}} k\Cat_{\nec}\rightarrow R\Cat_{\nec}\xrightarrow{(-)^{temp}} \ts\Mod(R)$.
\end{Rem}

\begin{Def}
Let $X \in \ts\Mod(k)_S$ be levelwise $k$-flat. An \emph{$R$-deformation of $X$} is a levelwise $R$-flat templicial $R$-module $\bar{X} \in \ts\Mod(R)_S$ together with an isomorphism $k \otimes_R \bar{X} \cong X$ in $\ts\Mod(k)_S$. Consider two $R$-deformations $\bar{X}$ and $\bar{X}'$ of $X$.  An \emph{equivalence} of $R$-deformations is an isomorphism of templicial $R$-modules $\Psi: \bar{X}\cong \bar{X}'$ such that $k\otimes_R \Psi = \id_X$.
\end{Def}

\begin{Ex}
Let $\bar{\ccc}$ be a (flat) $R$-deformation of a $k$-linear category $\ccc$ (see \cite{lowen2006deformation}). Then $N_R(\bar{\ccc})$ is an $R$-deformation of $N_k(\ccc)$ (see Example \ref{exnerve}).
\end{Ex}

\begin{Ex}
\label{ex:free}
Let $X$ be a simplicial set with $S = X_0$. Consider the free functor $\tilde{F}_k: \SSet_S \lra \ts\Mod(k)_S$ for any commutative ring $k$. 
Then with respect to the ring map $\theta: R \lra k$ we have 
$$\tilde{F}_k\cong (k \otimes_R -)\tilde{F}_R$$
for $k \otimes_R -$ from \eqref{eqtenscolax}. 
Further, $\tilde{F}_k(X)_n(a,b) = \oplus_{x\in X_n(a,b)}k$ is obviously $k$-flat so $\tilde{F}_R(X)$ is an $R$-deformation of $\tilde{F}_k(X)$.
\end{Ex}


\begin{Ex}
Let us give a concrete first order templicial deformation, putting $R = k[\epsilon]$ with $\epsilon^2 = 0$.
We define $P = \Tilde{F}(\Delta^2\coprod_{\Delta^1}\partial \Delta^2)$ using the inclusions $\delta_{1}: \Delta^1 \to \Delta^2$ and $\delta_{1}: \Delta^1 \to \partial \Delta^2$ in $\SSet$. This templicial module thus has the following non-degenerate simplices
\begin{align*}
    f_1\in P_1(a,b_1), &\quad g_1\in P_1(b_1,c)\\
    f_2\in P_1(a,b_2), &\quad g_2\in P_1(b_2,c)\\
    h\in P_1(a,c), & \quad \alpha \in P_2(a,c)
\end{align*}
\noindent with $\mu_{1,1}(\alpha)=f_1\otimes g_1$ and $d_1(\alpha)=h$. We describe a $k[\epsilon]$-deformation of $P$. On objects, $\bar{P}_n=k[\epsilon] \otimes_ k P_n\in k[\epsilon] \Quiv_{\{a,b_1,b_2,c\}}$. It is enough then to specify 
\begin{align*}
    \bar{\mu}_{1,1}(\alpha)&=f_1\otimes g_1 + f_2\otimes g_2\epsilon\\
    \bar{d}_1(\alpha)&=h
\end{align*}
\noindent Observe that, similarly to \cite[Example 2.10]{LM1}, the 2-simplex $\alpha\in\bar{P}_2(a,c)$ does not have a definable ``middle vertex'' as $\bar{\mu}_{1,1}(\alpha)$ is not a pure tensor. As such, we obtain a non-free deformation of a free templicial object, expanding on Example \ref{ex:free}. Below we show a pictorial representation of $P$ and $\bar{P}$, on the left and right, respectively. 

\begin{center}
\begin{tikzpicture}[scale=1.5]
\filldraw[fill=gray,opacity=0.6]
(-0.2,-0.3) -- (0.2,0.7) -- (1.2,-0.3);
\filldraw
(-0.2,-0.3) circle (1pt) node[left]{$a$}
(0.2,0.7) circle (1pt) node[above]{$b_{1}$}
(0.8,0.7) circle (1pt) node[above]{$b_{2}$}
(1.2,-0.3) circle (1pt) node[right]{$c$}
(0.4,0) node{$\alpha$};
\draw[-latex] (-0.2,-0.3) -- node[left,pos=0.6]{$f_{1}$} (0.2,0.7);
\draw[-latex] (-0.2,-0.3) -- node[left,pos=0.6]{$f_{2}$} (0.8,0.7);
\draw[-latex] (0.2,0.7) -- node[right,pos=0.4]{$g_{1}$} (1.2,-0.3);
\draw[-latex] (0.8,0.7) -- node[right,pos=0.4]{$g_{2}$} (1.2,-0.3);
\draw[-latex] (-0.2,-0.3) -- node[below,pos=0.5]{$h$} (1.2,-0.3);

\filldraw[fill=gray,opacity=0.6]
(2.8,-0.3) -- (3.2,0.7) -- (4.2,-0.3);
\filldraw[fill=gray,opacity=0.3]
(2.8,-0.3) -- (3.8,0.7) -- (4.2,-0.3);
\filldraw
(2.8,-0.3) circle (1pt) node[left]{$a$}
(3.2,0.7) circle (1pt) node[above]{$b_{1}$}
(3.8,0.7) circle (1pt) node[above]{$b_{2}$}
(4.2,-0.3) circle (1pt) node[right]{$c$}
(3.5,0.15) node{$\alpha$};
\draw[-latex] (2.8,-0.3) -- node[left,pos=0.6]{$f_{1}$} (3.2,0.7);
\draw[-latex] (2.8,-0.3) -- node[left,pos=0.6]{$f_{2}$} (3.8,0.7);
\draw[-latex] (3.2,0.7) -- node[right,pos=0.4]{$g_{1}$} (4.2,-0.3);
\draw[-latex] (3.8,0.7) -- node[right,pos=0.4]{$g_{2}$} (4.2,-0.3);
\draw[-latex] (2.8,-0.3) -- node[below,pos=0.5]{$h$} (4.2,-0.3);

\end{tikzpicture}
\end{center}

\end{Ex}

Let us make some observations before proving our main theorem.

\begin{Lem}\label{lemtempnec2}
Let $\bar{X}$ be an $R$-deformation of a templicial $k$-module $X$. Then we have $k \otimes_R \bar{X}_{\bullet}(a,b)\cong X_{\bullet}(a,b)$ for all $a,b \in S$.
\end{Lem}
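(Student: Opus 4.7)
The plan is to use that the change-of-rings functor $k \otimes_R -\colon \Mod(R) \to \Mod(k)$ is strong symmetric monoidal and cocontinuous, and to exploit the fact that the necklicial object $\bar{X}_\bullet$ is assembled from the quivers $\bar{X}_n$ purely via the monoidal structure $\otimes_S$ (which parametrises inert necklace maps) together with the inner face/degeneracy morphisms of the templicial object $\bar{X}$ (which parametrise active necklace maps).

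First, I would record the key observation that for $R$-modules $M, N$ there is a natural isomorphism $k \otimes_R (M \otimes_R N) \cong (k \otimes_R M) \otimes_k (k \otimes_R N)$, obtained by associativity of tensor product together with the identity $P \otimes_R N \cong P \otimes_k (k \otimes_R N)$ for any $k$-module $P$ (which uses the surjectivity of $\theta$). Combined with the preservation of direct sums, this gives that the induced functor $k \otimes_R -\colon R\Quiv_S \to k\Quiv_S$ is strong monoidal for the quiver tensor product $\otimes_S$. Since the hypothesis provides an isomorphism $k \otimes_R \bar{X} \cong X$ in $\ts\Mod(k)_S$, base change compatibly identifies each quiver $k \otimes_R \bar{X}_n$ with $X_n$.

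Second, for a necklace $(T,p)$ with $T = \{0 = t_0 < \ldots < t_k = p\}$ we have by definition
\[
\bar{X}_T = \bar{X}_{t_1} \otimes_S \bar{X}_{t_2 - t_1} \otimes_S \cdots \otimes_S \bar{X}_{p - t_{k-1}},
\]
so iterating the previous step yields a natural isomorphism $k \otimes_R \bar{X}_T \cong X_T$. To finish, it remains to verify that these pointwise isomorphisms are natural in $T$, i.e.\ commute with the action of necklace maps. The active maps are induced by the inner face and degeneracy morphisms of $\bar{X}$, while the inert maps are induced by the comultiplications $\bar{\mu}_{k,l}$; both kinds of structure are preserved by the monoidal isomorphism $k \otimes_R \bar{X} \cong X$. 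Evaluating the resulting natural isomorphism of functors $\necop \to k\Quiv_S$ at any pair $a,b \in S$ yields the desired statement.

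The proof is essentially a bookkeeping exercise and, incidentally, does not use the flatness of $\bar{X}$. The only mild obstacle is the naturality check with respect to inert maps, which I expect to reduce directly to the strong monoidality of $k \otimes_R -$ established in the first step, combined with the fact that the structural isomorphism $k \otimes_R \bar{X} \cong X$ is, by definition, an isomorphism of colax monoidal functors and hence compatible with the comultiplications.
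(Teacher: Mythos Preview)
Your argument is correct. The paper actually states this lemma without proof, treating it as immediate from the fact that $k\otimes_{R}-$ is strong monoidal (this is precisely the observation recorded just before \eqref{eqtenscolax}); your write-up simply makes explicit the routine verification the paper leaves to the reader, and your remark that flatness plays no role here is accurate.
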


\begin{Lem}\label{lemforgetwK}
Let $Y$ be a necklicial $k$-module. If $Y$ is weak Kan as a necklicial $k$-module, it is also weak Kan as a necklicial $R$-module.
\end{Lem}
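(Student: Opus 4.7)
The plan is to apply Proposition \ref{propFpres} to the restriction-of-scalars functor $U: \Mod(k) \lra \Mod(R)$ along $\theta$, since regarding the necklicial $k$-module $Y$ as a necklicial $R$-module is nothing but forming the composite $U \circ Y$ in $\Fun(\necop, \Mod(R))$. Hence the only thing to verify is that $U$ is exact.

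Exactness of $U$ is standard and can be argued in two equivalent ways. Either one works on underlying abelian groups, noting that a sequence of $k$-modules is exact if and only if the underlying sequence of $R$-modules (obtained via $\theta$) is exact; or one observes that $U$ admits both a left adjoint (extension of scalars $k \otimes_R -$) and a right adjoint (coinduction $\Hom_R(k,-)$), and therefore preserves all limits and colimits.

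With $U$ exact, Proposition \ref{propFpres} directly yields that $U \circ Y$ is weak Kan as a necklicial $R$-module, which is the desired statement. The one point to be careful about -- and where one might initially expect trouble -- is that the free functor appearing in Definition \ref{DefwK} must be interpreted in the ambient category of the lifting problem: on the $R$-side this is $\tilde{F}_R$ rather than $\tilde{F}_k$. However, the compatibility $k \otimes_R \tilde{F}_R(X) \cong \tilde{F}_k(X)$ (visible from the description $\tilde{F}_R(X)_n(a,b) = \bigoplus_{X_n(a,b)} R$ as in Example \ref{ex:free}) implies that any $R$-linear morphism out of $\tilde{F}_R(X)$ into the $k$-module $Y$ factors canonically through $\tilde{F}_k(X)$, so the horn-lifting problems over $R$ and over $k$ correspond exactly. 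No genuine obstacle arises, since the statement is essentially formal once one recognises that the base change being performed is merely restriction of scalars, which is as well-behaved as one could ask.
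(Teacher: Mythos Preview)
Your proof is correct and follows exactly the same route as the paper: apply Proposition \ref{propFpres} to the exact forgetful functor $\Mod(k)\to\Mod(R)$. Your final paragraph about the compatibility of $\tilde{F}_R$ and $\tilde{F}_k$ is correct but superfluous, since Proposition \ref{propFpres} is proved via the regular-epimorphism characterisation of Proposition \ref{propcharq} and never invokes the free functors directly.
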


\begin{proof}
This directly follows from Proposition \ref{propFpres} applied to the forgetful functor $\Mod(k) \lra \Mod(R)$.
\end{proof}

\begin{Thm}\label{thmmainq}
Let $\bar{X}$ be a levelwise flat templicial $R$-module and suppose $X = k \otimes_R \bar{X}$ is a quasi-category in $k$-modules. Then $\bar{X}$ is a quasi-category in $R$-modules.
\end{Thm}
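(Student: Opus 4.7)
The plan is to exhibit $\bar{X}_{\bullet}(a,b)$ as an extension of two weak Kan necklicial $R$-modules and invoke the stability under extensions from Proposition \ref{propextKan}. Concretely, I would argue as follows.

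First, I would reduce to the small case $I^2 = 0$ where $I = \mathrm{Ker}(\theta)$. As observed in \S \ref{pardefsetup}, any surjection $\theta: R \twoheadrightarrow k$ with nilpotent kernel factors through maps with square-zero kernel, so one may assume $I^2 = 0$, in which case $I$ is naturally a $k$-module. By levelwise $R$-flatness of $\bar{X}$, tensoring the short exact sequence $0 \to I \to R \to k \to 0$ against each $\bar{X}_n(a,b)$ and using $I \otimes_R \bar{X}_n(a,b) \cong I \otimes_k X_n(a,b)$ yields, for every $a,b\in S$, a short exact sequence in $\Fun(\necop,\Mod(R))$:
\[
0 \lra I \otimes_k X_{\bullet}(a,b) \lra \bar{X}_{\bullet}(a,b) \lra X_{\bullet}(a,b) \lra 0,
\]
where I use Lemma \ref{lemtempnec2} to identify the quotient.

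Next, I would check that both outer terms are weak Kan necklicial $R$-modules. Since $X = k \otimes_R \bar{X}$ is a quasi-category in $k$-modules, $X_{\bullet}(a,b)$ is weak Kan as a necklicial $k$-module, hence also as a necklicial $R$-module by Lemma \ref{lemforgetwK}. For the kernel, note that $X$ is levelwise $k$-flat (being the base change of the levelwise $R$-flat $\bar{X}$ along $\theta$), so Theorem \ref{thmwingsmain} applied to the $k$-module $N = I$ gives that $X_{\bullet}(a,b) \otimes_k I$ is weak Kan over $k$; again by Lemma \ref{lemforgetwK} it is weak Kan over $R$.

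Finally, applying Proposition \ref{propextKan} to the abelian category $\vvv = \Mod(R)$ to the above short exact sequence shows that $\bar{X}_{\bullet}(a,b)$ is weak Kan for every $a,b \in S$, so $\bar{X}$ is a quasi-category in $R$-modules. The main technical input is Theorem \ref{thmwingsmain}, which is what makes the external tensoring $X_{\bullet}(a,b) \otimes_k I$ behave well; the reduction to a small extension and the closure-under-extensions argument are then essentially formal. I expect no real obstacle beyond making sure the identifications of $I \otimes_R \bar{X}$ with $I \otimes_k X$ are carried out carefully using the flatness hypothesis.
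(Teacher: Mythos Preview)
Your proposal is correct and follows essentially the same approach as the paper: reduce to the square-zero case, use levelwise flatness to obtain the short exact sequence of necklicial $R$-modules, apply Theorem \ref{thmwingsmain} (together with Lemma \ref{lemforgetwK}) to the outer terms, and conclude via Proposition \ref{propextKan}. The only cosmetic difference is that you also spell out why $X$ is levelwise $k$-flat before invoking Theorem \ref{thmwingsmain}, which the paper leaves implicit.
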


\begin{proof}
	We may assume $I^2 = 0$. By definition we are to show, for $a,b \in S$, that the necklicial $R$-module $\bar{X}_{\bullet}(a,b)$ is weak Kan.
	By the levelwise flatness of $\bar{X}$, we have an induced the short exact sequence 
	$$0 \lra I \otimes_R \bar{X}_{\bullet}(a,b) \lra \bar{X}_{\bullet}(a,b) \lra k \otimes_R \bar{X}_{\bullet}(a,b) = X_{\bullet}(a,b) \lra 0$$
	of necklicial $R$-modules. By Proposition \ref{propextKan}, it suffices to show that both $X_{\bullet}(a,b)$ and $I \otimes_R \bar{X}_{\bullet}(a,b)$ are weak Kan necklicial $R$-modules.
	For this, by Lemma \ref{lemforgetwK}, it suffices that they are both weak Kan as necklicial $k$-modules.
	Now by assumption $X$ is a quasi-category in $k$-modules so $X_{\bullet}(a,b)$ is indeed weak Kan, and by Theorem \ref{thmwingsmain}, so is $I \otimes_k X_{\bullet}(a,b)\cong I \otimes_R \bar{X}_\bullet(a,b)$.
\end{proof}

\subsection{Deformations of deg-projective templicial modules} \label{pardefdegproj}
In this section, we show that the property of being deg-projective lifts under levelwise flat deformation of templicial modules. 

Note that projectivity of modules is preserved by the left adjoint $k \otimes_R -: \Mod(R) \lra \Mod(k)$ of the exact forgetful functor. Conversely, it is well known that projectivity of modules lifts under flat, nilpotent deformation. We will make use of the following slight reinforcement.

\begin{Prop}\label{propproj}
	Let $I^2 = 0$. Let $\bar{P}$ be an $R$-module with $\Tor^{R}_1(k, \bar{P}) = 0$ and such that $P = k \otimes_R \bar{P}$ is projective in $\Mod(k)$. Then $\bar{P}$ is projective in $\Mod(R)$.
\end{Prop}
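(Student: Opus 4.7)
The strategy I would pursue is to realize $\bar{P}$ as a direct summand of a free $R$-module by lifting an idempotent that witnesses $P$ as projective over $k$. First, since $P$ is projective, choose a free $k$-module $F$ with $k$-linear maps $\iota: P \to F$ and $\pi: F \to P$ satisfying $\pi\iota = \id_P$. Let $\bar{F}$ be the free $R$-module on the same index set, so that $k \otimes_R \bar{F} \cong F$. Using freeness of $\bar{F}$ and surjectivity of $\bar{P} \twoheadrightarrow P$, lift $\pi$ to an $R$-linear map $\bar{\pi}: \bar{F} \to \bar{P}$.

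Next, I would lift the idempotent $\iota\pi$ in $\mathrm{End}_k(F)$ to an idempotent in $\mathrm{End}_R(\bar{F})$. Pick any $R$-linear lift $\bar{\psi}: \bar{F} \to \bar{F}$ of $\iota\pi$, which exists because $\bar{F}$ is free and the values of $\iota\pi$ on a basis can be lifted coordinate-wise along $R \twoheadrightarrow k$. Since $\iota\pi$ is idempotent, the operator $\bar{\psi}^2 - \bar{\psi}$ has image in $I\bar{F}$; because $I^2 = 0$, it therefore sends $I\bar{F}$ into $I^2\bar{F} = 0$, so $(\bar{\psi}^2 - \bar{\psi})^2 = 0$. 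The standard idempotent-lifting formula $\bar{e} := 3\bar{\psi}^2 - 2\bar{\psi}^3$ then produces an $R$-linear idempotent on $\bar{F}$ that reduces modulo $I$ to $\iota\pi$. Setting $\bar{P}' := \bar{e}(\bar{F})$ gives a direct summand of $\bar{F}$, hence a projective $R$-module.

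It remains to identify $\bar{P}'$ with $\bar{P}$. Restricting yields $\bar{\pi}|_{\bar{P}'}: \bar{P}' \to \bar{P}$, which reduces modulo $I$ to $\pi\iota = \id_P$ under the identifications $\bar{P}'/I\bar{P}' \cong \iota(P) \cong P$ and $\bar{P}/I\bar{P} \cong P$. Since $I$ is nilpotent, it lies in the Jacobson radical of $R$, so Nakayama's lemma gives surjectivity. For injectivity, I would consider the resulting short exact sequence $0 \to \bar{K}' \to \bar{P}' \to \bar{P} \to 0$ and tensor with $k$ over $R$: the hypothesis $\Tor_1^R(k, \bar{P}) = 0$, combined with the middle arrow becoming an isomorphism, yields $k \otimes_R \bar{K}' = 0$, and a further application of Nakayama gives $\bar{K}' = 0$. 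Hence $\bar{P} \cong \bar{P}'$ is projective over $R$.

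The main subtlety is the idempotent-lifting step, where the assumption $I^2 = 0$ is used essentially: two successive applications of $\bar{\psi}^2 - \bar{\psi}$ land in $I^2\bar{F} = 0$, which is exactly what makes the polynomial correction work in one shot. It is also worth noting that the $\Tor$-vanishing hypothesis enters precisely at the injectivity step; the weaker assumption of flatness of $P$ over $k$ alone would not suffice there, mirroring the role that the same $\Tor$-vanishing plays earlier in the paper in producing clean short exact sequences of necklicial modules associated to a levelwise flat deformation.
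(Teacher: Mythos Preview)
Your argument is correct and takes a genuinely different route from the paper. The paper proves $\Ext^1_R(\bar{P},M)=0$ for all $R$-modules $M$: using $I^2=0$ one reduces to $M\in\Mod(k)$, takes a projective presentation $0\to K\to Q\to\bar{P}\to 0$ over $R$, applies $k\otimes_R-$ (exact on the left by the $\Tor$-hypothesis), and then compares $\Hom_R(-,M)$ with $\Hom_k(k\otimes_R-,M)$ via adjunction to conclude. Your approach is instead constructive: you exhibit $\bar{P}$ explicitly as a direct summand of a free $R$-module by lifting the idempotent $\iota\pi$ through the square-zero extension, and then identify the resulting summand with $\bar{P}$ using the $\Tor$-hypothesis and nilpotence of $I$. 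One small point of phrasing: when you invoke ``Nakayama's lemma'' for surjectivity and for $\bar{K}'=0$, you are really using the elementary fact that $M=IM$ with $I^2=0$ forces $M=0$ (no finite-generation needed); the Jacobson-radical form of Nakayama would not apply here in general. The paper's argument is shorter and, as noted in the remark following the proposition, fits into a spectral-sequence framework that generalises to abelian deformations; your argument is more concrete and self-contained, and makes visible exactly where each hypothesis is consumed.
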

\begin{proof}
It suffices to show that $\Ext^1_R(\bar{P},M) = 0$ for all $M \in \Mod(R)$. Since
	every such $M$ can be written as an extension of $k$-modules $0 \lra IM \lra M \lra k \otimes_R M \lra 0$, we may assume $M \in \Mod(k)$. 
Take an exact sequence $0 \lra K \lra Q \lra \bar{P} \lra 0$ in which $Q$ is a projective $R$-module. Since $\Tor_1^R(k, \bar{P}) = 0$, there is an induced exact sequence of $k$-modules $0 \lra k \otimes_R K \lra k \otimes_R Q \lra k \otimes_R \bar{P} = P \lra 0$ in which also $k \otimes_R Q$ is projective. The result now follows from the following commutative diagram 
$$
\xymatrix{{\Hom_k(k \otimes_R Q, M)} \ar[d]_{\cong} \ar[r] & {\Hom_k(k \otimes_R K, M)} \ar[r] \ar[d]_{\cong} & {\Ext^1_k(k \otimes_R \bar{P},M)} \ar[r]\ar[d] & 0 \\
{\Hom_R(Q, M)}  \ar[r] & {\Hom_R(K, M)} \ar[r] & {\Ext^1_R(\bar{P},M)} \ar[r] & 0
}
$$
and projectivity of $P = k \otimes_R \bar{P}$ as a $k$-module.
\end{proof}
\begin{Rem}
Alternatively, Proposition \ref{propproj} follows from the Grothendieck spectral sequence associated to $\Hom_R(N,M)\cong \Hom_k(k \otimes_R N, M)$ for $N \in \Mod(R)$ and $M \in \Mod(k)$. See \cite[Prop. 4.7]{lowen2006deformation} for the dual spectral sequences in the context of abelian deformations.	
\end{Rem}

Let $\bar{X}$ be a templicial $R$-module with $X = k \otimes_R \bar{X}$ the induced templicial $k$-module for the functor $k \otimes_R -$ from \eqref{eqtenscolax}.
Consider the associated canonical exact sequences used to express deg-projectivity:

\begin{equation}\label{eqcanbar}
		\xymatrix{{E_{\bar{X}}:} & {\bar{X}^{deg}_n} \ar[r]^{{\can}^{\bar{X}}_n} & {\bar{X}_n} \ar[r] & {\bar{X}^{nd}_n} \ar[r] & 0
	}
\end{equation}
	
\begin{equation}\label{eqcan}
		\xymatrix{{E_X:} & {{X}^{deg}_n} \ar[r]^{{\can}^X_n} & {{X}_n} \ar[r] & {{X}^{nd}_n} \ar[r] & 0 
	}
\end{equation}

\begin{Lem}\label{lemcancan}
	We have $E_X\cong k \otimes_R E_{\bar{X}}$.
\end{Lem}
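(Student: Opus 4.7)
The plan is to apply the right exact functor $k \otimes_R -$ termwise to the sequence $E_{\bar{X}}$ and identify the result with $E_X$ piece by piece. Since $k \otimes_R -:\Mod(R) \lra \Mod(k)$ is a left adjoint, the induced functor on quivers (and on templicial objects via \eqref{eqtenscolax}) preserves all colimits; in particular it preserves the coequalizers and coproducts which define the colimit
$$\bar{X}^{deg}_n = \colim_{\substack{f:[n]\twoheadrightarrow [m]\\ f\neq \id}} \bar{X}_m$$
in $R\Quiv_S$. Hence $k \otimes_R \bar{X}^{deg}_n \cong \colim (k \otimes_R \bar{X}_m) \cong \colim X_m = X^{deg}_n$ in $k\Quiv_S$.

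Next I would check that this isomorphism intertwines the canonical maps. The map $\can^{\bar{X}}_n$ is, by definition, the unique morphism out of the colimit induced by the family $(\bar{X}(f): \bar{X}_m \lra \bar{X}_n)_{f:[n]\twoheadrightarrow[m],\, f\neq \id}$; applying $k \otimes_R -$ produces the morphism out of $\colim(k\otimes_R \bar{X}_m)$ induced by the family $(X(f): X_m \lra X_n)$, which is exactly $\can^{X}_n$ under the identifications above. Right exactness of $k \otimes_R -$ then yields $k \otimes_R \bar{X}^{nd}_n \cong X^{nd}_n$ as its cokernel, and $k \otimes_R E_{\bar{X}}$ is exact in its last three terms in a way that matches $E_X$.

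Combining these three identifications gives the commutative diagram
$$
\xymatrix{
k \otimes_R \bar{X}^{deg}_n \ar[r]^{k \otimes_R \can^{\bar{X}}_n} \ar[d]^{\cong} & k \otimes_R \bar{X}_n \ar[r] \ar[d]^{\cong} & k \otimes_R \bar{X}^{nd}_n \ar[r] \ar[d]^{\cong} & 0 \\
X^{deg}_n \ar[r]^{\can^{X}_n} & X_n \ar[r] & X^{nd}_n \ar[r] & 0
}
$$
of exact sequences, which is precisely the claimed isomorphism $E_X \cong k \otimes_R E_{\bar{X}}$.

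The only mildly delicate point is confirming that the colimit defining $\bar{X}^{deg}_n$ is preserved under $k \otimes_R -$ at the level of $k\Quiv_S$ rather than just pointwise in $\Mod(R)$; this follows because colimits in $R\Quiv_S$ are computed vertexwise and the strong monoidal left adjoint $k \otimes_R -$ commutes with both the vertexwise colimits and the passage between quiver categories. Everything else is naturality of the canonical maps and right exactness of $k \otimes_R -$, so I expect no real obstacles.
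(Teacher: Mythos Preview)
Your proposal is correct and is precisely the argument the paper has in mind: the paper's proof simply says ``Immediate from the definition of the canonical sequences, and the fact that $k \otimes_R -$ preserves colimits,'' and your write-up is a faithful unpacking of that sentence.
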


\begin{proof}
	Immediate from the definition of the canonical sequences, and the fact that $k \otimes_R -$ preserves colimits.
\end{proof}

\begin{Prop}
Let $\bar{X}$ be a deg-projective templicial $R$-module. Then $X = k \otimes_R \bar{X}$ is a deg-projective templicial $k$-module.
\end{Prop}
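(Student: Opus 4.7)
The plan is to exploit the explicit characterization of deg-projectivity via the canonical exact sequence $E_X$ (as laid out after Definition 2.18), together with Lemma \ref{lemcancan} which identifies $E_X$ with $k\otimes_R E_{\bar{X}}$. Since deg-projectivity is equivalent to $E_X$ being split short exact with projective non-degenerate part, the argument reduces to observing that the additive functor $k\otimes_R -$ preserves both splitness of short exact sequences and projectivity.

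More precisely, first I would unpack the hypothesis: $\bar{X}$ deg-projective means that $E_{\bar{X}}$ is split short exact in $R\Quiv_S$, so there is a direct sum decomposition $\bar{X}_n\cong \bar{X}_n^{deg}\oplus \bar{X}_n^{nd}$ with $\bar{X}_n^{nd}$ a projective object of $R\Quiv_S$ (equivalently, $\bar{X}_n^{nd}(a,b)$ is projective over $R$ for all $a,b\in S$). Next, I would apply the additive functor $k\otimes_R -: R\Quiv_S\to k\Quiv_S$ (which is defined levelwise and hence additive in each hom-module) to this split short exact sequence. Additivity preserves split short exact sequences, so I obtain a split short exact sequence $k\otimes_R E_{\bar{X}}$.

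Then, by Lemma \ref{lemcancan}, this split sequence is isomorphic to $E_X$. In particular $X_n^{nd}\cong k\otimes_R \bar{X}_n^{nd}$. It remains to observe that $X_n^{nd}$ is projective in $k\Quiv_S$. This holds because $k\otimes_R -: \Mod(R)\to \Mod(k)$ is left adjoint to the forgetful functor $\Mod(k)\to \Mod(R)$, which is exact (as $\theta$ is surjective); a left adjoint to an exact functor preserves projectives. Applied componentwise, $k\otimes_R -: R\Quiv_S\to k\Quiv_S$ preserves projectives, so $X_n^{nd}$ is projective. Hence $E_X$ is split short exact with projective cokernel, which is precisely the reformulation of deg-projectivity of $X$ given in the text after Definition 2.18.

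I do not expect any real obstacle here: unlike Theorem \ref{thmmainq}, no reduction to the square-zero case, no use of wings, and no stability-under-extensions argument is needed. The only point requiring a sentence of justification is that $k\otimes_R -$ preserves projectives, which follows purely formally from the adjunction with the exact forgetful functor. Thus the proof is essentially the one-line argument: apply $k\otimes_R -$ to $E_{\bar{X}}$ and invoke Lemma \ref{lemcancan}.
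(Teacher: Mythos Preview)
Your proposal is correct and follows essentially the same approach as the paper: both arguments unpack deg-projectivity of $\bar{X}$ as splitness of $E_{\bar{X}}$ with projective cokernel, apply the additive functor $k\otimes_R -$ (which preserves split exactness), invoke Lemma \ref{lemcancan} to identify the result with $E_X$, and conclude by noting that $k\otimes_R -$ preserves projectives. Your justification of the last point via the adjunction with the exact forgetful functor is exactly the remark the paper makes just before Proposition \ref{propproj}.
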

\begin{proof} 
Consider the canonical exact sequence $E_{\bar{X}}$ from \eqref{eqcanbar}. By assumption, ${\can}^{\bar{X}}_n$ is a monomorphism and $\bar{X}^{nd}_n$ is projective. Consequently, $E_{\bar{X}}$ is a split short exact sequence whence by Lemma \ref{lemcancan} the same holds for $E_X$ from \eqref{eqcan}. A forteriori, $\can^X_n$ is mono. Further, $X^{nd}_n\cong k \otimes_R \bar{X}^{nd}_n$ is projective as desired.
\end{proof}

\begin{Thm}\label{propdegproj}
Let $\bar{X}$ be a levelwise flat templicial $R$-module and suppose $X = k \otimes_R \bar{X}$ is a deg-projective templicial $k$-module. Then $\bar{X}$ is deg-projective.
\end{Thm}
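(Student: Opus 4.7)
The plan is to reduce to the case $I^2 = 0$ via the small extensions factorisation recalled in \S \ref{pardefsetup}, and then to verify the two conditions for deg-projectivity of $\bar{X}$: that $\can^{\bar{X}}_n$ is a monomorphism and that $\bar{X}^{nd}_n$ is projective. By Lemma \ref{lemcancan}, applying $k \otimes_R -$ to $E_{\bar{X}}$ recovers $E_X$, which by hypothesis is split short exact. All arguments are run pointwise on quiver components $(a,b)$, since colimits, tensor products and $\Tor$ over $k$-modules are computed componentwise in $k\Quiv_{S}$.

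\textbf{Step 1: projectivity of $\bar{X}^{nd}_n$.} Since $k \otimes_R \bar{X}^{nd}_n \cong X^{nd}_n$ is projective over $k$, Proposition \ref{propproj} reduces this to showing $\Tor_1^R(k, \bar{X}^{nd}_n) = 0$. Set $\bar{D} := \mathrm{im}(\can^{\bar{X}}_n) \subseteq \bar{X}_n$. The short exact sequence $0 \to \bar{D} \to \bar{X}_n \to \bar{X}^{nd}_n \to 0$, together with the flatness of $\bar{X}_n$, produces an exact sequence
\[
0 \to \Tor_1^R(k, \bar{X}^{nd}_n) \to k \otimes_R \bar{D} \to X_n.
\]
The surjection $\bar{X}^{deg}_n \twoheadrightarrow \bar{D}$ becomes, after $k \otimes_R -$, a surjection $X^{deg}_n \twoheadrightarrow k \otimes_R \bar{D}$, and its postcomposition with $k \otimes_R \bar{D} \to X_n$ equals $\can^X_n$, which is injective by deg-projectivity of $X$. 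Hence $k \otimes_R \bar{D} \to X_n$ is injective, forcing $\Tor_1^R(k, \bar{X}^{nd}_n) = 0$, and Proposition \ref{propproj} delivers projectivity of $\bar{X}^{nd}_n$.

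\textbf{Step 2: injectivity of $\can^{\bar{X}}_n$.} The just established projectivity of $\bar{X}^{nd}_n$ splits the sequence $0 \to \bar{D} \to \bar{X}_n \to \bar{X}^{nd}_n \to 0$, so $\bar{D}$ is flat. Letting $K := \ker(\can^{\bar{X}}_n)$, applying $k \otimes_R -$ to $0 \to K \to \bar{X}^{deg}_n \to \bar{D} \to 0$ therefore yields a short exact sequence
\[
0 \to k \otimes_R K \to X^{deg}_n \to k \otimes_R \bar{D} \to 0.
\]
By the argument of Step 1 the right-hand surjection is in fact an isomorphism (since $\can^X_n$, its composite with the inclusion into $X_n$, is injective), hence $k \otimes_R K = 0$, i.e.\ $K = IK$. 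Using $I^2 = 0$ one then has $IK = I \cdot IK = I^2 K = 0$, whence $K = 0$ and $\can^{\bar{X}}_n$ is a monomorphism. Together with Step 1, this shows $\bar{X}$ is deg-projective.

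The main subtlety lies in the necessary order of the two steps: the injectivity of $\can^{\bar{X}}_n$ in Step 2 depends on flatness of the image $\bar{D}$, which is only available after projectivity of $\bar{X}^{nd}_n$ has been secured in Step 1. Conversely, Step 1 must sidestep any direct analysis of the kernel $K$, working only with the surjection $\bar{X}^{deg}_n \twoheadrightarrow \bar{D}$, so that the $\Tor$ vanishing can be read off from a sequence whose outer terms are already under control via Lemma \ref{lemcancan}.
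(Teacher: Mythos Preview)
Your proof is correct, but it proceeds in the opposite order to the paper's argument, and by somewhat different means. The paper sets up a $3\times 3$ diagram with columns $I\otimes_k E_X$, $E_{\bar X}$, $E_X$ and uses the four lemma on the top two rows to first show that $\can^{\bar X}_n$ is a monomorphism; only then does it read off $\Tor^R_1(k,\bar X^{nd}_n)=0$ from the (now short exact) middle column and invoke Proposition~\ref{propproj}. You instead factor $\can^{\bar X}_n$ through its image $\bar D$, use flatness of $\bar X_n$ to get $\Tor^R_1(k,\bar X^{nd}_n)\hookrightarrow k\otimes_R\bar D\to X_n$, and observe that this map is injective because it is the second factor of the injective $\can^X_n$ through a surjection; this gives projectivity of $\bar X^{nd}_n$ first. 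Injectivity of $\can^{\bar X}_n$ then follows by a clean Nakayama step ($k\otimes_R K=0$ together with $I^2=0$). Your route avoids the $3\times 3$ diagram and the four lemma entirely and makes the use of $I^2=0$ explicit, at the cost of the small extra bookkeeping with $\bar D$; the paper's route is more diagrammatic and gets injectivity without needing flatness of $\bar D$.
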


\begin{proof}
We may assume $I^2 = 0$. By the assumption, $E_X$ from \eqref{eqcan} is a split short exact sequence. Consequently, the same holds for $I \otimes_R E_{\bar{X}}\cong I \otimes_k E_X$. For all $n\geq 0$ we obtain the following commutative diagram:
\[
\begin{tikzcd}[]
 & 0 \arrow{d}& & 0 \arrow{d}&\\
  & I\otimes_k X_n^{deg}\arrow[]{r}{}\arrow[hook]{d}{}&\bar{X}_n^{deg} \arrow[two heads]{r}{}\arrow[]{d}& X_n^{deg}\arrow[hook]{d} \arrow{r}& 0\\
   0 \arrow{r}& I \otimes_k X_n \arrow[two heads]{d}{}\arrow[hook]{r}{}& \bar{X}_n \arrow[two heads]{d}{}\arrow[two heads]{r}{}& X_n\arrow{r}\arrow[two heads]{d}{} & 0\\
   & I\otimes_k X_n^{nd}\arrow[]{r}{}\arrow[]{d}{}& \bar{X}_n^{nd} \arrow[two heads]{r}{}\arrow[]{d}& X_n^{nd}\arrow[]{d} \arrow{r}& 0\\
   & 0 & 0 & 0 &\\
\end{tikzcd}
\]
\noindent where the middle row is exact by flatness of $\bar{X}$. We can apply the 4 lemma to the first two rows and conclude that $\can^{\bar{X}}_n$ is a monomorphism and thus the middle column is exact. 
It remains to show that $\bar{X}^{nd}_n$ is projective. By Proposition \ref{propproj}, for this it suffices to show that $\Tor^R_1(k, \bar{X}^{nd}_n) = 0$. To see that this equality holds, note that the long exact $\Tor^R_{\ast}(k,-)$ sequence associated to the middle column features
$$0 \lra \Tor^R_1(k, \bar{X}^{nd}_n) \lra k \otimes_R \bar{X}^{deg}_n \lra k \otimes_R \bar{X} \lra k \otimes_R \bar{X}^{nd}_n \lra 0$$
where we have used levelwise flatness of $\bar{X}$. Comparing with the exact right column (using Lemma \ref{lemcancan}) yields that $\Tor^R_1(k, \bar{X}^{nd}_n) = 0$ as desired.\\

\end{proof}

\begin{Rem}
It is easily seen that in fact all rows and all columns of the above diagram are exact (e.g. by Proposition \ref{propdeglevel}).
\end{Rem}

\printbibliography

@book {aguiar2010monoidal,
    AUTHOR = {Aguiar, M. and Mahajan, S.},
     TITLE = {Monoidal functors, species and {H}opf algebras},
    SERIES = {CRM Monograph Series},
    VOLUME = {29},
 PUBLISHER = {American Mathematical Society, Providence, RI},
      YEAR = {2010},
     PAGES = {lii+784},
   MRCLASS = {18D10 (05A30 16T30)},
  MRNUMBER = {2724388},
MRREVIEWER = {Volodymyr V. Lyubashenko},
       %DOI = {10.1090/crmm/029}
}

@article{artintatevandenbergh,
	Author = {Artin, M. and Tate, J. and Van den Bergh, M.},
	Date-Added = {2013-09-12 13:01:46 +0000},
	Date-Modified = {2013-09-12 13:18:28 +0000},
	Journal = {The {G}rothendieck {F}estschrift, {V}ol.{I}, Progr. Math},
	Pages = {33-85},
	Title = {Some algebras associated to automorphisms of elliptic curves},
	Volume = {86},
	Year = {1990}
}

@Online{bacard2010Segal,
  hyphenation = {american},
  author      = {Bacard, H. V.},
  title       = {Segal enriched categories {I}},
  version     = {1},
  date        = {2010-09-20},
  eprinttype  = {arxiv},
  eprint      = {1009.3673v1},
  primaryClass= {math.CT}
}

@article {baues1980geometry,
    AUTHOR = {Baues, H. J.},
     TITLE = {Geometry of loop spaces and the cobar construction},
   JOURNAL = {Mem. Amer. Math. Soc.},
  FJOURNAL = {Memoirs of the American Mathematical Society},
    VOLUME = {25},
      YEAR = {1980},
    NUMBER = {230},
     PAGES = {ix+171},
      ISSN = {0065-9266,1947-6221},
   MRCLASS = {55P35 (55U15 57T30)},
  MRNUMBER = {567799},
MRREVIEWER = {R.\ M.\ Vogt},
       %DOI = {10.1090/memo/0230},
       %URL = {https://doi.org/10.1090/memo/0230},
}

@article{bergner2007model,
  author      = {Bergner, J. E.},
  title       = {A model category structure on the category of simplicial categories},
  journaltitle= {Trans. Amer. Math. Soc.},
  fjournaltitle= {Transactions of the American Mathematical Society},
  volume      = {359},
  number	  = {5},
  date        = {2007},
  pages       = {2043--2058},
  %doi		  = {10.1090/S0002-9947-06-03987-0}
}

@phdthesis{bermejo2022dgSegal,
    AUTHOR = {Dimitriadis Bermejo, E.},
     TITLE = {A new model for dg-categories},
 PUBLISHER = {},
    SCHOOL = {Universit\'e de Toulouse},
      YEAR = {2022},
     PAGES = {101},
  MRNUMBER = {}
}

@book {boardmanvogt,
    AUTHOR = {Boardman, J. M. and Vogt, R. M.},
     TITLE = {Homotopy invariant algebraic structures on topological spaces},
    SERIES = {Lecture Notes in Mathematics, Vol. 347},
 PUBLISHER = {Springer-Verlag, Berlin-New York},
      YEAR = {1973},
     PAGES = {x+257},
   MRCLASS = {55D35},
  MRNUMBER = {0420609},
MRREVIEWER = {J. Stasheff},
	   %DOI = {10.1007/BFb0068547}
}

@article {dedeken2018filtered,
     LABEL = {DDL},
    AUTHOR = {De Deken, Olivier and Lowen, Wendy},
     TITLE = {Filtered {$cA_{\infty}$}-categories and functor categories},
   JOURNAL = {Appl. Categ. Structures},
  FJOURNAL = {Applied Categorical Structures. A Journal Devoted to
              Applications of Categorical Methods in Algebra, Analysis,
              Order, Topology and Computer Science},
    VOLUME = {26},
      YEAR = {2018},
    NUMBER = {5},
     PAGES = {943--996},
      ISSN = {0927-2852,1572-9095},
   MRCLASS = {18D05 (16E45 18A25)},
  MRNUMBER = {3856926},
MRREVIEWER = {Josep\ Elgueta},
       %DOI = {10.1007/s10485-018-9526-2},
       %URL = {https://doi.org/10.1007/s10485-018-9526-2},
}

@article {dugger2011rigidification,
    AUTHOR = {Dugger, D. and Spivak, D. I.},
     TITLE = {Rigidification of quasi-categories},
   JOURNAL = {Algebr. Geom. Topol.},
  FJOURNAL = {Algebraic \& Geometric Topology},
    VOLUME = {11},
      YEAR = {2011},
    NUMBER = {1},
     PAGES = {225--261},
   MRCLASS = {55U40 (18G30)},
  MRNUMBER = {2764042},
MRREVIEWER = {G\'{e}rald Gaudens},
       %DOI = {10.2140/agt.2011.11.225},
}

@incollection {fukaya2003deformation,
    AUTHOR = {Fukaya, K.},
     TITLE = {Deformation theory, homological algebra and mirror symmetry},
 BOOKTITLE = {Geometry and physics of branes ({C}omo, 2001)},
    SERIES = {Ser. High Energy Phys. Cosmol. Gravit.},
     PAGES = {121--209},
 PUBLISHER = {IOP, Bristol},
      YEAR = {2003},
      ISBN = {0-7503-0863-X},
   MRCLASS = {14D15 (14J32 32G13 32Q25 53D12 53D40)},
  MRNUMBER = {1950958},
MRREVIEWER = {Richard\ P.\ Thomas},
}

@book {fukaya2009lagrangian,
    AUTHOR = {Fukaya, K. and Oh, Y.-G. and Ohta, H. and Ono,
              K.},
     TITLE = {Lagrangian intersection {F}loer theory: anomaly and
              obstruction. {P}art {I}},
    SERIES = {AMS/IP Studies in Advanced Mathematics},
    VOLUME = {46.1},
 PUBLISHER = {American Mathematical Society, Providence, RI; International
              Press, Somerville, MA},
      YEAR = {2009},
     PAGES = {xii+396},
      ISBN = {978-0-8218-4836-4},
   MRCLASS = {53D40 (53D12 53D37)},
  MRNUMBER = {2553465},
MRREVIEWER = {Michael\ J.\ Usher},
       %DOI = {10.1090/amsip/046.1},
       %URL = {https://doi.org/10.1090/amsip/046.1},
}

@article {gepner2015enriched,
    AUTHOR = {Gepner, D. and Haugseng, R.},
     TITLE = {Enriched {$\infty$}-categories via non-sym\-metric
              {$\infty$}-operads},
   JOURNAL = {Adv. Math.},
  FJOURNAL = {Advances in Mathematics},
    VOLUME = {279},
      YEAR = {2015},
     PAGES = {575--716},
   MRCLASS = {18D20 (18D10 18D50)},
  MRNUMBER = {3345192},
MRREVIEWER = {Christopher L. Rogers},
       %DOI = {10.1016/j.aim.2015.02.007},
}

@incollection {gerstenhaber1988cohomology,
    AUTHOR = {Gerstenhaber, Murray and Schack, Samuel D.},
     TITLE = {Algebraic cohomology and deformation theory},
 BOOKTITLE = {Deformation theory of algebras and structures and applications
              ({I}l {C}iocco, 1986)},
    SERIES = {NATO Adv. Sci. Inst. Ser. C: Math. Phys. Sci.},
    VOLUME = {247},
     PAGES = {11--264},
 PUBLISHER = {Kluwer Acad. Publ., Dordrecht},
      YEAR = {1988},
      ISBN = {90-277-2804-6},
   MRCLASS = {16A58 (16A61 17B56 18G10)},
  MRNUMBER = {981619},
MRREVIEWER = {Mary\ Schaps},
       %DOI = {10.1007/978-94-009-3057-5\_2},
       %URL = {https://doi.org/10.1007/978-94-009-3057-5_2},
}

@article {gerstenhaber1990bialgebra,
    AUTHOR = {Gerstenhaber, M. and Schack, S. D.},
     TITLE = {Bialgebra cohomology, deformations, and quantum groups},
   JOURNAL = {Proc. Nat. Acad. Sci. U.S.A.},
  FJOURNAL = {Proceedings of the National Academy of Sciences of the United
              States of America},
    VOLUME = {87},
      YEAR = {1990},
    NUMBER = {1},
     PAGES = {478--481},
      ISSN = {0027-8424},
   MRCLASS = {16A58 (16A25 17B35 17B55)},
  MRNUMBER = {1031952},
MRREVIEWER = {Ya.\ S.\ So\u{\i}bel\cprime man},
       %DOI = {10.1073/pnas.87.1.478},
       %URL = {https://doi.org/10.1073/pnas.87.1.478},
}

@Online{ginot2016deformation,
  hyphenation = {american},
  author      = {Ginot, G. and Yalin, S.},
  title       = {Deformation theory of bialgebras, higher Hochschild cohomology and formality},
  version     = {1},
  date        = {2016-06-05},
  eprinttype  = {arxiv},
  eprint      = {1606.01504v1},
  primaryClass= {math.AT}
}

@Online{hirschowitz1998descente,
  hyphenation = {american},
  author      = {Hirschowitz, A. and Simpson, C.},
  title       = {Descente pour les n-champs (Descent for n-stacks)},
  version     = {3},
  date        = {2001-03-13},
  eprinttype  = {arxiv},
  eprint      = {math/9807049v3},
  primaryClass= {math.AG}
}

@article {joyal2002quasi,
    AUTHOR = {Joyal, A.},
     TITLE = {Quasi-categories and {K}an complexes},
      NOTE = {Special volume celebrating the 70th birthday of Professor Max
              Kelly},
   JOURNAL = {J. Pure Appl. Algebra},
  FJOURNAL = {Journal of Pure and Applied Algebra},
    VOLUME = {175},
      YEAR = {2002},
    NUMBER = {1-3},
     PAGES = {207--222},
   MRCLASS = {55U10 (18G55)},
  MRNUMBER = {1935979},
MRREVIEWER = {Donald M. Davis},
       %DOI = {10.1016/S0022-4049(02)00135-4},
}

@article {keller2001introduction,
    AUTHOR = {Keller, B.},
     TITLE = {Introduction to {$A$}-infinity algebras and modules},
   JOURNAL = {Homology Homotopy Appl.},
  FJOURNAL = {Homology, Homotopy and Applications},
    VOLUME = {3},
      YEAR = {2001},
    NUMBER = {1},
     PAGES = {1--35},
   MRCLASS = {18E30 (18D50 18G40 55P43 55U35)},
  MRNUMBER = {1854636},
MRREVIEWER = {Ulrike Tillmann},
       %DOI = {10.4310/hha.2001.v3.n1.a1},
}

@incollection {keller2006differential,
    AUTHOR = {Keller, B.},
     TITLE = {On differential graded categories},
 BOOKTITLE = {International {C}on\-gress of {M}athematicians. {V}ol. {II}},
     PAGES = {151--190},
 PUBLISHER = {Eur. Math. Soc., Z\"{u}rich},
      YEAR = {2006},
   MRCLASS = {18E30 (14A22 16D90)},
  MRNUMBER = {2275593},
MRREVIEWER = {Volodymyr V. Lyubashenko},
}

@article {kelly2005basic,
     LABEL = {Kelly},
    AUTHOR = {Kelly, G. M.},
     TITLE = {Basic concepts of enriched category theory},
      NOTE = {Reprint of the 1982 original [Cambridge Univ. Press,
              Cambridge; MR0651714]},
   JOURNAL = {Repr. Theory Appl. Categ.},
  FJOURNAL = {Reprints in Theory and Applications of Categories},
    NUMBER = {10},
      YEAR = {2005},
     PAGES = {vi+137},
   MRCLASS = {18-02 (00B60 18D10 18D20)},
  MRNUMBER = {2177301},
 	   %URL = {http://www.tac.mta.ca/tac/reprints/articles/10/tr10abs.html}
}

@Online{leinster2000homotopy,
  hyphenation = {american},
  author      = {Leinster, T.},
  title       = {Homotopy algebras for operads},
  version     = {1},
  date        = {2000-02-22},
  eprinttype  = {arxiv},
  eprint      = {math/0002180v1},
  primaryClass= {math.QA}
}

@online{LM1,
  hyphenation = {american},
  author      = {W. Lowen and A. Mertens},
  title       = {Enriched quasi-categories and the templicial homotopy coherent nerve},
  version     = {1},
  date        = {2023-02-05},
  eprinttype  = {arxiv},
  eprint      = {2302.02484v1},
  primaryClass= {math.CT}
}

@online{LM2,
  hyphenation = {american},
  author      = {W. Lowen and A. Mertens},
  title       = {Frobenius templicial modules and the dg-nerve},
  version     = {3},
  date        = {2023-05-18},
  eprinttype  = {arxiv},
  eprint      = {2005.04778v3},
  primaryClass= {math.CT}
}

@article {lowen2006deformation,
    AUTHOR = {Lowen, W. and Van den Bergh, M.},
     TITLE = {Deformation theory of abelian categories},
   JOURNAL = {Trans. Amer. Math. Soc.},
  FJOURNAL = {Transactions of the American Mathematical Society},
    VOLUME = {358},
      YEAR = {2006},
    NUMBER = {12},
     PAGES = {5441--5483},
      ISSN = {0002-9947,1088-6850},
   MRCLASS = {18E15 (16S80)},
  MRNUMBER = {2238922},
       %DOI = {10.1090/S0002-9947-06-03871-2},
       %URL = {https://doi.org/10.1090/S0002-9947-06-03871-2},
}

@article {lowen2011hochschild,
    AUTHOR = {Lowen, W. and Van den Bergh, M.},
     TITLE = {A {H}ochschild cohomology comparison theorem for prestacks},
   JOURNAL = {Trans. Amer. Math. Soc.},
  FJOURNAL = {Transactions of the American Mathematical Society},
    VOLUME = {363},
      YEAR = {2011},
    NUMBER = {2},
     PAGES = {969--986},
      ISSN = {0002-9947,1088-6850},
   MRCLASS = {16E40 (18D30)},
  MRNUMBER = {2728592},
       %DOI = {10.1090/S0002-9947-2010-05288-2},
       %URL = {https://doi.org/10.1090/S0002-9947-2010-05288-2},
}

@book {lurie2009higher,
    AUTHOR = {Lurie, J.},
     TITLE = {Higher topos theory},
    SERIES = {Annals of Mathematics Studies},
    VOLUME = {170},
 PUBLISHER = {Princeton University Press, Princeton, NJ},
      YEAR = {2009},
     PAGES = {xviii+925},
   MRCLASS = {18-02 (18B25 18E35 18G30 18G55 55U40)},
  MRNUMBER = {2522659},
MRREVIEWER = {Mark Hovey},
       %DOI = {10.1515/9781400830558},
}

@unpublished{lurie2016higher,
  author = {Lurie, J.},
  title = {Higher algebra},
  year = {2016},
  %url = {https://people.math.harvard.edu/~lurie/papers/HA.pdf},
  note = {unpublished}
}

@unpublished{mertens2023nerves,
  label = {Mer},
  author = {Mertens, Arne},
  note = {In preparation},
  year = {},
  title = {On nerves of enriched categories},
}

@article {positselski2018weakly,
    AUTHOR = {Positselski, L.},
     TITLE = {Weakly curved {${A}_{\infty}$}-algebras over a topological
              local ring},
   JOURNAL = {M\'{e}m. Soc. Math. Fr. (N.S.)},
  FJOURNAL = {M\'{e}moires de la Soci\'{e}t\'{e} Math\'{e}matique de France.
              Nouvelle S\'{e}rie},
    NUMBER = {159},
      YEAR = {2018},
     PAGES = {vi+206},
      ISSN = {0249-633X,2275-3230},
      ISBN = {978-2-85629-899-2},
   MRCLASS = {16E35 (13D10 16E45 18G10 18G80 53D37)},
  MRNUMBER = {3939851},
MRREVIEWER = {Estanislao\ Herscovich},
       %DOI = {10.24033/msmf.467},
       %URL = {https://doi.org/10.24033/msmf.467},
}

@article {rezk2001model,
    AUTHOR = {Rezk, C.},
     TITLE = {A model for the homotopy theory of homotopy theory},
   JOURNAL = {Trans. Amer. Math. Soc.},
  FJOURNAL = {Transactions of the American Mathematical Society},
    VOLUME = {353},
      YEAR = {2001},
    NUMBER = {3},
     PAGES = {973--1007},
      ISSN = {0002-9947},
   MRCLASS = {55U35 (18G30)},
  MRNUMBER = {1804411},
MRREVIEWER = {Brooke E. Shipley},
       %DOI = {10.1090/S0002-9947-00-02653-2},
       %URL = {https://doi.org/10.1090/S0002-9947-00-02653-2},
}

@book {simpson2012homotopy,
    AUTHOR = {Simpson, C.},
     TITLE = {Homotopy theory of higher categories},
    SERIES = {New Mathematical Monographs},
    VOLUME = {19},
 PUBLISHER = {Cambridge University Press, Cambridge},
      YEAR = {2012},
      %DOI  = {10.1017/CBO9780511978111}
}

@MISC{stacks-project,
    LABEL = "SP",
    AUTHOR = "Authors, The Stacks Project",
    TITLE = "Stacks Project",
    URL = "https://stacks.math.columbia.edu/"
}

@inproceedings {street1974elementary,
    AUTHOR = {Street, R.},
     TITLE = {Elementary cosmoi. {I}},
 BOOKTITLE = {Category {S}eminar ({P}roc. {S}em., {S}ydney, 1972/1973)},
    SERIES = {Lecture Notes in Math., Vol. 420},
     PAGES = {134--180},
 PUBLISHER = {Springer, Berlin},
      YEAR = {1974},
   MRCLASS = {18D99},
  MRNUMBER = {0354813},
MRREVIEWER = {H. Gonshor},
}

@article {tabuada2005structure,
    AUTHOR = {Tabuada, G.},
     TITLE = {Une structure de cat\'{e}gorie de mod\`eles de {Q}uillen sur la
              cat\'{e}gorie des dg-cat\'{e}gories},
   JOURNAL = {C. R. Math. Acad. Sci. Paris},
  FJOURNAL = {Comptes Rendus Math\'{e}matique. Acad\'{e}mie des Sciences. Paris},
    VOLUME = {340},
      YEAR = {2005},
    NUMBER = {1},
     PAGES = {15--19},
   MRCLASS = {18E99},
  MRNUMBER = {2112034},
       %DOI = {10.1016/j.crma.2004.11.007},
}

@article {toen2007homotopy,
    AUTHOR = {To\"{e}n, B.},
     TITLE = {The homotopy theory of {$dg$}-categories and derived {M}orita
              theory},
   JOURNAL = {Invent. Math.},
  FJOURNAL = {Inventiones Mathematicae},
    VOLUME = {167},
      YEAR = {2007},
    NUMBER = {3},
     PAGES = {615--667},
      ISSN = {0020-9910,1432-1297},
   MRCLASS = {18D05 (18E30 18G55 19D55)},
  MRNUMBER = {2276263},
MRREVIEWER = {Mark\ Hovey},
       %DOI = {10.1007/s00222-006-0025-y},
       %URL = {https://doi.org/10.1007/s00222-006-0025-y},
}

@article {VdBquadrics,
     LABEL = {VdB},
    AUTHOR = {Van den Bergh, M.},
     TITLE = {Noncommutative quadrics},
   JOURNAL = {Int. Math. Res. Not. IMRN},
  FJOURNAL = {International Mathematics Research Notices. IMRN},
      YEAR = {2011},
    NUMBER = {17},
     PAGES = {3983--4026},
      ISSN = {1073-7928},
   MRCLASS = {14A22 (16S38)},
  MRNUMBER = {2836401},
MRREVIEWER = {Adam Nyman},
       %URL = {https://doi.org/10.1093/imrn/rnq234},
}

@article{Konts,
	Abstract = {},
	Author = {Kontsevich, M.},
	Da = {2001/06/01},
	Date-Added = {2023-08-15 08:21:59 +0000},
	Date-Modified = {2023-08-15 08:21:59 +0000},
	Id = {Kontsevich2001},
	Isbn = {1573-0530},
	Journal = {Letters in Mathematical Physics},
	Number = {3},
	Pages = {271--294},
	Title = {Deformation Quantization of Algebraic Varieties},
	Ty = {JOUR},
	Volume = {56},
	Year = {2001},
	Bdsk-Url-1 = {https://doi.org/10.1023/A:1017957408559},
	Bdsk-Url-2 = {http://dx.doi.org/10.1023/A:1017957408559}}

@article{gerstenhaber2,
 ISSN = {0003486X},
 author = {M. Gerstenhaber},
 journal = {Annals of Mathematics},
 number = {1},
 pages = {59--103},
 publisher = {Annals of Mathematics},
 title = {On the Deformation of Rings and Algebras},
 urldate = {2023-08-15},
 volume = {79},
 year = {1964},
%URL = {http://www.jstor.org/stable/1970484},
}

@article{gerstenhaber1,
 ISSN = {0003486X},
 author = {M. Gerstenhaber},
 journal = {Annals of Mathematics},
 number = {2},
 pages = {267--288},
 publisher = {Annals of Mathematics},
 title = {The Cohomology Structure of an Associative Ring},
 urldate = {2023-08-15},
 volume = {78},
 year = {1963}
}

@article{VdBdefquant,
label = {VdB},
title = {On global deformation quantization in the algebraic case},
journal = {Journal of Algebra},
volume = {315},
number = {1},
pages = {326-395},
year = {2007},
issn = {0021-8693},
author = {M. {Van den Bergh}},
keywords = {Deformation quantization, Abelian categories},
abstract = {We give a proof of Yekutieli's global algebraic deformation quantization result which does not rely on the choice of local sections of the bundle of affine coordinate systems. Instead we use an argument inspired by algebraic De Rham cohomology.}
}

@article{lowenprestacks,
 ISSN = {00029947},
 abstract = {For a ringed space (X, ${\cal O}$ ), we show that the deformations of the abelian category Mod( ${\cal O}$ ) of sheaves of ${\cal O}$ -modules (Lowen and Van den Bergh, 2006) are obtained from algebroid prestacks, as introduced by Kontsevich. In case X is a quasi-compact separated scheme the same is true for Qch( ${\cal O}$ ), the category of quasi-coherent sheaves on X. It follows in particular that there is a deformation equivalence between Mod( ${\cal O}$ ) and Qch( ${\cal O}$ ).},
 author = {W. Lowen},
 journal = {Transactions of the American Mathematical Society},
 number = {3},
 pages = {1631--1660},
 publisher = {American Mathematical Society},
 title = {Algebroid Prestacks and Deformations of Ringed Spaces},
 urldate = {2023-08-15},
 volume = {360},
 year = {2008}
}

@article{bondalkapranovenhanced,
year = {1991},
month = {feb},
publisher = {},
volume = {70},
number = {1},
pages = {93},
author = {A I Bondal and  M M Kapranov},
title = {Enhanced triangulated categories},
journal = {Mathematics of the USSR-Sbornik},
abstract = {A solution is given to the problem of describing a triangulated category generated by a finite number of objects. It requires the notion of "enhancement" of a triangulated category, by means of the complexes RHom.}
}

@phdthesis{lefevre,
    title        = {Sur les A$_\infty$-catégories},
    author       = {K. Lefèvre-Hasegawa},
    year         = 2003,
    month        = {November},
    school       = {Université Paris 7 - Denis Diderot},
    type         = {PhD thesis}
}

@article{quantum,
	Abstract = {A notion of well-behaved Hopf algebra is introduced; reflexivity (for strong duality) between Hopf algebras of Drinfeld-type and their duals, algebras of coefficients of compact semi-simple groups, is proved. A hidden classical group structure is clearly indicated for all generic models of quantum groups. Moyal-product-like deformations are naturally found for all FRT-models on coefficients andC∞-functions. Strong rigidity (Hbi2={\{}0{\}}) under deformations in the category of bialgebras is proved and consequences are deduced.},
	Author = {Bonneau, P. and Flato, M. and Gerstenhaber, M. and Pinczon, G.},
	Da = {1994/03/01},
	Date-Added = {2023-08-15 17:23:04 +0000},
	Date-Modified = {2023-08-15 17:23:04 +0000},
	Id = {Bonneau1994},
	Isbn = {1432-0916},
	Journal = {Communications in Mathematical Physics},
	Number = {1},
	Pages = {125--156},
	Title = {The hidden group structure of quantum groups: Strong duality, rigidity and preferred deformations},
	Ty = {JOUR},
	Volume = {161},
	Year = {1994},
	Bdsk-Url-1 = {https://doi.org/10.1007/BF02099415},
	Bdsk-Url-2 = {http://dx.doi.org/10.1007/BF02099415}}

@article{tetra,
author = {Shoikhet, B.},
year = {2012},
month = {02},
pages = {},
title = {Tetramodules Over a Bialgebra Form a 2-fold Monoidal Category},
volume = {21},
journal = {Applied Categorical Structures},
}

@misc{box,
      title={Box operads and higher Gerstenhaber brackets}, 
      author={Dinh Van, Huang and Hermans, Lander and Lowen, Wendy},
      year={2023},
      eprint={2305.20036},
      archivePrefix={arXiv},
      primaryClass={math.AT}
}

@article{DLpre,
title = {The Gerstenhaber–Schack complex for prestacks},
journal = {Advances in Mathematics},
volume = {330},
pages = {173-228},
year = {2018},
issn = {0001-8708},
author = {H. {Dinh Van} and W. Lowen},
keywords = {Hochschild cohomology, Prestacks},
abstract = {The aim of this work is to construct a complex which through its higher structure directly controlls deformations of general prestacks, building on the work of Gerstenhaber and Schack for presheaves of algebras. In defining a Gerstenhaber–Schack complex CGS•(A) for an arbitrary prestack A, we have to introduce a differential with an infinite sequence of components instead of just two as in the presheaf case. If A˜ denotes the Grothendieck construction of A, which is a U-graded category, we explicitly construct inverse quasi-isomorphisms F and G between CGS•(A) and the Hochschild complex CU(A˜), as well as a concrete homotopy T:FG⟶1, which had not been obtained even in the presheaf case. As a consequence, by applying the Homotopy Transfer Theorem, one can transfer the dg Lie structure present on the Hochschild complex in order to obtain an L∞-structure on CGS•(A), which controlls the higher deformation theory of the prestack A. This answers the open problem about the higher structure on the Gerstenhaber–Schack complex at once in the general prestack case.}
}

@misc{LVdBcur,
      title={The curvature problem for formal and infinitesimal deformations}, 
      author={Lowen, Wendy and Van den Bergh, Michel},
      year={2015},
      eprint={1505.03698},
      archivePrefix={arXiv},
      primaryClass={math.KT}
}

@article{KL,
    author = {Keller, B. and Lowen, W.},
    title = "{On Hochschild Cohomology and Morita Deformations}",
    journal = {International Mathematics Research Notices},
    volume = {2009},
    number = {17},
    pages = {3221-3235},
    year = {2009},
    month = {04},
    abstract = "{In this paper we show that, in general, first-order Morita deformations are too limited to capture the second Hochschild cohomology of a differential graded category. For differential graded categories with bounded above cohomology, the Morita deformations do constitute a part of the Hochschild cohomology.}",
    issn = {1073-7928}
}

@article{KaledinHdR,
author = {Kaledin, D.},
year = {2006},
month = {11},
pages = {},
title = {Non-commutative Hodge-to-de Rham Degeneration via the Method of Deligne-Illusie},
volume = {4},
journal = {Pure and Applied Mathematics Quarterly},
}

@article {Kal2,
    AUTHOR = {Kaledin, D. B. and Konovalov, A. A. and Magidson, K. O.},
     TITLE = {Spectral algebras and non-commutative {H}odge-to-de {R}ham
              degeneration},
      NOTE = {English version published in Proc. Steklov Inst. Math. {\bf
              307} (2019), no. 1, 51--64.},
   JOURNAL = {Tr. Mat. Inst. Steklova},
  FJOURNAL = {Trudy Matematicheskogo Instituta Imeni V. A. Steklova},
    VOLUME = {307},
      YEAR = {2019},
     PAGES = {63--77},
      ISSN = {0371-9685},
   MRCLASS = {14A22 (16E40 16S38)},
  MRNUMBER = {4070058},
       %DOI = {10.4213/tm4037},
       %URL = {https://doi.org/10.4213/tm4037},
}

\end{document}